\numberwithin{equation}{section}
\numberwithin{figure}{section}
\theoremstyle{plain}
\newtheorem{thm}{\protect\theoremname}[section]
  \theoremstyle{plain}
  \newtheorem{lem}[thm]{\protect\lemmaname}
  \theoremstyle{plain}
  \newtheorem{cor}[thm]{\protect\corollaryname}
  \providecommand{\corollaryname}{Corollary}
  \providecommand{\lemmaname}{Lemma}
\providecommand{\theoremname}{Theorem}
\begin{document}

\title[On Generalized Ordered Sets]{{\normalsize{}On Generalized Ordered Sets: a constructive development}}

\author{Jean S. Joseph}
\begin{abstract}
{\normalsize{}We propose a notion of a generalized order, which can
be used for the notion of a strict partial order. We introduce a weak
order to replace the usual weak order defined from a strict partial
order. In a constructive setting, that usual weak order causes problems
on the real numbers because their strict order cannot be proved to
be trichotomous.}{\normalsize \par}
\end{abstract}

\maketitle
\tableofcontents{}

\email{jsjean00@gmail.com}

\section{Introduction}

We introduce a structure that can be used in lieu of a strict partially
ordered set. In a constructive setting, the typical weak order on
a strict partially ordered set, which is defined as $x\leq y$ if
$x<y$ or $x=y$, is problematic on the real numbers (see 3.1). So
we propose another weak order, which we denote as $\leq_{P}$ (see
Section 3). We call such structure a \textit{generalized ordered set}:
it is a set with a binary relation satisfying three properties, namely
asymmetry%
\footnote{See Section3.%
}, transitivity%
\footnote{See Section 3.%
}, and positive antisymmetry%
\footnote{See Section 3.%
}. The first weak order $\leq$ has the advantage that it is an automatic
partial order, but our weak order $\leq_{P}$ lacks this advantage.
That is why we impose \textit{positive antisymmetry}. A generalized
ordered set is not a linearly ordered set, in that there are generalized
ordered sets that are not linearly ordered (see Section 3). More can
be found in subsequent sections, but we will summarize the main results.
In the third section, we introduce two weak orders on any set with
a relation $<$; one has a negative sense and the other a positive
sense. We have arrived at the following:
\begin{itemize}
\item when the binary relation $<$ on any set is asymmetric and cotransitive,
the two weak orders coincide (Theorems \ref{thm: 1} \& \ref{thm: 3}),
but the converse has no constructive proof (Theorem \ref{thm: 4});
\end{itemize}
later on that section, we show there are propositions about the real
numbers that have no constructive proofs if the weak order on any
set is taken to be $x\leq y$ if $x<y$ or $x=y$. We show the following:
\begin{itemize}
\item the statement ``for all $x,y\in\mathbb{R}$, if $\neg y<x$, then
$x<y$ or $x=y$'' implies the limited principle of omniscience (LPO)
(Theorem \ref{thm: 12}).
\end{itemize}
In the fourth section, we explain how any generalized order yields
a partial order, in the usual sense, and we show that a partial order
$\sim$ cannot automatically yield a generalized order with the definition
``$x\sim y$ and $\neg x=y$'' because
\begin{itemize}
\item a binary relation defined as such is not positively antisymmetric
(Section 4).
\end{itemize}
In the fifth section, we have arrived at a somewhat surprising result.
If we define a linearly ordered set to be a set with an asymmetric,
cotransitive%
\footnote{See 2.3.%
}, and negatively antisymmetric%
\footnote{See 2.4.%
} binary relation, then there is no constructive proof that the lexicographic
order turns the Cartesian product into a linearly ordered set. In
fact, we prove 
\begin{itemize}
\item the statement ``the lexicographic order on the cartesian product
of any two ordered sets is cotransitive'' implies the law of excluded
middle (Theorem \ref{thm: 15}).
\end{itemize}
However, the category of generalized ordered sets is closed under
Cartesian product (Theorem \ref{thm: 13}), which entails the following:
\begin{itemize}
\item the category of generalized ordered sets has all finite products (Corollary
\ref{cor: 15}).
\end{itemize}

\section{Preliminaries}

\subsection{Constructive Mathematics}

The theoretical development that we present below is constructive,
meaning the proofs of the theorems will omit the use of the law of
excluded middle, which is all proposition is either true or false.
Some of our theorems are ``metatheorems'', meaning they are theorems
about the theory we are developing; an example is Theorem \ref{thm: 15}.
The proof of these metatheorems will also be constructive. We need
to emphasize, though, that our focus will be more on the theory than
on the metatheory.

There is a vast literature on constructive mathematics, so we cannot
possibly be exhaustive about it here. More can be found in \cite{key-1,key-19,key-2-1,key-4,key-6}

\subsection{Omniscience Principle}

In constructive mathematics, there are certain statements that are
thought to have no proof. Some of these statements are the law of
excluded middle (for all proposition $P$, $P$ or $\neg P$), the
limited principle of omniscience (each binary sequence has all its
terms equal to zero or has a term equal to $1$, Markov's principle
(a binary sequence, whose all terms cannot be equal to zero, contains
a term equal to $1$), and the law of double negation (for all proposition
$P$, if $\neg\neg P$, then $P$). The list is longer than what is
given here; a more detailed rendering can be found in \cite{key-14,key-4,key-9-1}.
A use of an omniscience principle is to show that other statements
cannot have a constructive proof. For instance, if one needs to show
that a statement $T$ cannot be proved, then one can show that $T$
implies one of the omniscience principles. We have used such move
in the sections below; for instance, see Theorem \ref{thm: 4}.

\subsection{Cotransitivity}

A binary relation $<$ on a set $X$ is cotransitive if, for all $x,y,z\in X$,
$x<y$ implies $x<z$ or $z<y$ \cite{key-14}. Bridges and V{\^i}{\c t}{\u a}
in \cite{key-5} constructs a version of the real numbers whose strict
order is cotransitive {[}Proposition 2.1.8{]}. Also, Mandelkern in
\cite{key-12} defines a projective plane to have inequality relations
that are cotransitive. Any transitive, trichotomous relation on a
set is cotransitive; in particular, the strict orders on the integers,
the natural numbers, and the rational numbers are cotransitive.

\subsection{(Linearly) Ordered Sets}

Examples of generalized ordered sets are ordered sets (see Theorem
\ref{thm: 6}). Ordered sets are defined in \cite{key-11}. For completeness,
we include the definition here. An \textit{ordered set} is a set $X$
with a binary relation $<$ such that, for all $x,y,z\in X$, (1)
$x<y$ implies $y<x$ is false ; (2) $x<y$ implies $x<z$ or $z<x$;
(3) $x<y$ is false and $y<x$ is false imply $x=y$ (\textit{negative
antisymmetry}). The real numbers are ordered, so are all of its subsets.
In particular, the natural numbers, the integers, and the rational
numbers are ordered. 

An ordered set is a substitute for a linearly ordered set, defined
as a set with a binary relation $<$ such that, for all $x,y,z$,
(1) exactly one the following holds: $x<y$, $y<x$, or $x=y$, and
(2) $x<y<z$ implies $x<z$. The first condition is known as trichotomy.
Constructively, the usual notion of a linear order does not stand.
For instance, there is no constructive proof that the relation $<$
on the real numbers is trichotomous (see Theorem \ref{thm: 15-1}).

\section{Generalized Order}

Let $X$ be a set with a binary relation $<$. For $x,y\in X$, we
write $x\leq_{P}y$ if, for all $z\in X$, $z<x$ implies $z<y$,
and $y<z$ implies $x<z$. A \textit{generalized ordered set} is a
set $X$ with a binary relation $<$ such that, for all $x,y,z\in X$, 
\begin{itemize}
\item $x<y$ implies $y<x$ is false; (Asymmetry)
\item $x<y<z$ implies $x<z$; (Transitivity)
\item $x\leq_{P}y$ and $y\leq_{P}x$ imply $x=y$. (Positive Antisymmetry)
\end{itemize}
The \textit{dual} of a generalized ordered set $X$ is the generalized
ordered set $X_{d}$ whose underlying set is $X$ and binary relation
is $x<y$ in $X_{d}$ if and only if $y<x$ in $X$.
\begin{lem}
Let $X$ be a generalized ordered set. Then $x\leq_{P}y$ in $X$
if and only if $y\leq_{P}x$ in $X_{d}$.\end{lem}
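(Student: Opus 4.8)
The plan is to prove the biconditional by directly unfolding the definition of $\leq_{P}$ on each side and then substituting the defining relation of the dual. Fix $x,y\in X$. By definition, the statement $x\leq_{P}y$ in $X$ asserts that for all $z\in X$, both of the implications ``$z<x$ implies $z<y$'' and ``$y<z$ implies $x<z$'' hold, where $<$ denotes the relation of $X$.

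Next I would write out the right-hand side in the same explicit form. Let $<_{d}$ denote the relation of $X_{d}$, so that by definition $a<_{d}b$ if and only if $b<a$ in $X$. Then $y\leq_{P}x$ in $X_{d}$ asserts that for all $z\in X$, both ``$z<_{d}y$ implies $z<_{d}x$'' and ``$x<_{d}z$ implies $y<_{d}z$'' hold. Substituting the definition of $<_{d}$, the first clause ``$z<_{d}y$ implies $z<_{d}x$'' becomes ``$y<z$ implies $x<z$'', and the second clause ``$x<_{d}z$ implies $y<_{d}z$'' becomes ``$z<x$ implies $z<y$''.

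I would then observe that these are exactly the two clauses appearing in the definition of $x\leq_{P}y$ in $X$, merely listed in the opposite order. Since the quantifier ``for all $z\in X$'' ranges over the same underlying set in both statements, the two conjunctions are in fact the very same proposition, and so the biconditional holds trivially. Note that this argument is a pure rewriting of definitions: it invokes none of the three axioms of a generalized ordered set, and it uses no instance of the law of excluded middle, so it is constructively valid and in fact holds for any set equipped with a binary relation.

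The only thing requiring care — rather than a genuine obstacle — is translating the dual relation correctly and matching each of the two implications in the definition of $\leq_{P}$ with its counterpart on the dual side; once the substitution $a<_{d}b\iff b<a$ is applied consistently to both clauses, the equivalence is immediate.
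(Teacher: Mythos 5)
Your proof is correct and takes essentially the same route as the paper's: both arguments simply unfold the definition of $\leq_{P}$ and substitute $a<b$ in $X_{d}$ for $b<a$ in $X$, the paper presenting this as two symmetric implications while you observe that the two unfolded statements are literally the same conjunction with its clauses swapped. Your added remark that none of the three axioms of a generalized ordered set is needed is accurate and applies equally to the paper's proof, which likewise uses only the definition of the dual relation.
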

\begin{proof}
Suppose $x\leq_{P}y$ in $X$. For all $z\in X_{d}$, if $z<y$ in
$X_{d}$, then $y<z$ in $X$, so $x<z$, implying $z<x$ in $X_{d}$.
Similarly, $x<z$ in $X_{d}$ implies $y<z$ in $X_{d}$. Hence, $y\leq_{P}x$
in $X_{d}$. The converse is obtained in a similar way.
\end{proof}
In what follows, we show that any ordered set is a generalized ordered
set. Recall that we briefly talk about ordered sets in the previous
section, but a more detailed account on ordered sets is in \cite{key-11}.
\begin{lem}
Let $X$ be a generalized ordered set: \label{lem: 1}
\begin{enumerate}
\item $\leq_{P}$ is transitive;
\item $x<y$ implies $x\leq_{P}y$.
\end{enumerate}
\end{lem}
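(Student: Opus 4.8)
The plan is to unwind the definition of $\leq_P$ directly; both parts should follow from nothing more than the defining quantified conditions, with transitivity of $<$ doing the real work in part (2).

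For part (1), the plan is to assume $x \leq_P y$ and $y \leq_P z$ and verify the two clauses defining $x \leq_P z$. First I would take an arbitrary $w \in X$ with $w < x$; by $x \leq_P y$ this gives $w < y$, and then by $y \leq_P z$ this gives $w < z$. Symmetrically, if $z < w$, then $y \leq_P z$ yields $y < w$ and $x \leq_P y$ yields $x < w$. Since $w$ was arbitrary, both clauses hold, so $x \leq_P z$. This is a routine chaining of implications and presents no obstacle; note it uses only the definition of $\leq_P$, not any of the three axioms of a generalized ordered set.

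For part (2), the plan is to assume $x < y$ and verify $x \leq_P y$. Fix an arbitrary $z \in X$. If $z < x$, then transitivity applied to $z < x < y$ gives $z < y$, which is the first required implication. If $y < z$, then transitivity applied to $x < y < z$ gives $x < z$, the second required implication. Hence $x \leq_P y$.

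I expect no genuine obstacle here; the only point deserving care is that part (2) is exactly where the \emph{transitivity} axiom of a generalized ordered set is invoked, and one must apply it in the correct order (prepending $z < x$ to $x < y$ in one case, appending $y < z$ to $x < y$ in the other). Neither part uses asymmetry or positive antisymmetry, so the real substance is simply recognizing that $\leq_P$ is built to make these two facts fall out of its definition together with transitivity of $<$.
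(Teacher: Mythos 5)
Your proof is correct and takes essentially the same route as the paper's: a direct chaining of the two defining clauses of $\leq_{P}$ for part (1), and an application of transitivity of $<$ in each of the two cases ($z<x<y$ and $x<y<z$) for part (2). Your added remark that only the transitivity axiom is used is accurate and consistent with the paper's argument.
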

\begin{proof}
(1) Let $x,y,z\in X$. Suppose $x\leq_{P}y\leq_{P}z$. Let $u\in X$.
If $u<x$, then $u<y$ since $x\leq_{P}y$, so $u<z$ since $y\leq_{P}z$.
Similarly, $z<u$ implies $x<u$. Therefore, $x\leq_{P}z$.

(2) If $x<y$, then, for all $z\in X$, $z<x$ implies $z<y$ and
$y<z$ implies $x<z$, by transitivity.
\end{proof}
We write $x\leq_{N}y$ for $\neg y<x$.
\begin{thm}
Let $X$ be a set with a cotransitive binary relation $<$. If $x\leq_{N}y$
then $x\leq_{P}y$. \label{thm: 1}\end{thm}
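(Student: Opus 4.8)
The plan is to unpack both sides and show that cotransitivity discharges the two clauses of $\leq_{P}$ directly, using the hypothesis $\neg y<x$ to eliminate the unwanted disjunct each time. Recall that $x\leq_{P}y$ requires, for every $z\in X$, that (i) $z<x$ implies $z<y$ and (ii) $y<z$ implies $x<z$, while $x\leq_{N}y$ is by definition $\neg y<x$. So the whole argument reduces to verifying (i) and (ii) for an arbitrary $z\in X$, under the standing assumption that $y<x$ is false.

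For clause (i), I would fix $z$ with $z<x$ and apply cotransitivity to this inequality with $y$ as the interpolating element: from $z<x$ we obtain $z<y$ or $y<x$. The second alternative contradicts $\neg y<x$, so $z<y$ follows. Clause (ii) is entirely symmetric: fixing $z$ with $y<z$ and applying cotransitivity with $x$ as interpolant yields $y<x$ or $x<z$, and again $y<x$ is ruled out by hypothesis, leaving $x<z$. Since both clauses hold for arbitrary $z$, we conclude $x\leq_{P}y$.

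There is no substantive obstacle here; the only point requiring care is the choice of interpolating element in each application of cotransitivity, namely $y$ in the first clause and $x$ in the second. Once that choice is made, the hypothesis $\neg y<x$ discharges the bad case in both instances. Notice that we never appeal to a decision of $y<x$ versus its negation, only to the \emph{given} falsity of $y<x$ to reject one branch of a constructively valid disjunction, so the proof uses no instance of excluded middle and is constructive as required.
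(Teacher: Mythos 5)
Your proof is correct and follows essentially the same route as the paper's: for each clause of $\leq_{P}$ you apply cotransitivity (with $y$, respectively $x$, as interpolant) and use $\neg y<x$ to discard the bad disjunct, which is exactly the paper's argument with the ``similarly'' half written out explicitly. No gaps; the constructivity remark is also accurate, since only the given falsity of $y<x$ is used, never a decision of it.
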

\begin{proof}
For all $z\in X$, suppose $z<x$. Since $<$ is cotransitive, either
$z<y$ or $y<x$. But $y<x$ is false because $x\leq_{N}y$, so $z<y$.
Similarly, $y<z$ implies $x<z$. Hence, $x\leq_{P}y$.\end{proof}
\begin{thm}
The statement ``For any set $X$ with a binary relation $<$, if,
for all $x,y\in X$, $x\leq_{N}y$ implies $x\leq_{P}y$, then that
binary relation is cotransitive'' implies the weak excluded middle\textup{}%
\footnote{Weak excluded middle is, for all proposition $P$, either $\neg P$
or $\neg\neg P$%
}\textup{. \label{thm: 4}}\end{thm}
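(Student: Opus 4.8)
The plan is to use the standard strategy for deriving an omniscience principle: writing $T$ for the bracketed statement, I would manufacture, for an arbitrary proposition $P$, a set with a binary relation for which the hypothesis of $T$ holds, and for which the conclusion---cotransitivity---decodes at one particular triple into ``$\neg P$ or $\neg\neg P$''. Since $P$ is arbitrary, this yields the weak excluded middle, and the derivation itself uses no excluded middle, so it is a legitimate constructive metatheorem.

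Concretely, given $P$ I would take $X=\{a,b,c\}$ with three distinct points and define $<$ by declaring that $a<b$ holds unconditionally, that $a<c$ holds if and only if $\neg\neg P$, that $c<b$ holds if and only if $\neg P$, and that no other strict inequality holds. The point of this choice is the last step: since $a<b$ is provable, applying cotransitivity to the triple $x=a$, $y=b$, $z=c$ gives ``$a<c$ or $c<b$'', that is, ``$\neg\neg P$ or $\neg P$'', which is exactly the weak excluded middle. So once $T$ is invoked to supply cotransitivity, the conclusion is immediate.

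The real work---and the main obstacle---is verifying the hypothesis of $T$, namely that $x\leq_{N}y$ implies $x\leq_{P}y$ for every pair in $X$. This is a finite check over the nine ordered pairs, but it must be carried out constructively. Most pairs are trivial or vacuous: $x\leq_{N}y$ fails outright at $(b,a)$ because $a<b$, and $z<a$ and $b<z$ are never witnessed, so several of the defining clauses of $\leq_{P}$ are empty. The two pairs that carry the argument are $(b,c)$ and $(c,a)$. For $(b,c)$, the hypothesis $x\leq_{N}y$ unfolds to $\neg(c<b)=\neg\neg P$, and assuming $\neg\neg P$ I would prove $b\leq_{P}c$; the crucial observation is that $\neg\neg P$ refutes $\neg P$, hence refutes $c<b$ and every $c<z$, collapsing the dangerous clauses to vacuous ones, while the remaining clause $a<c$ is supplied by the very assumption $\neg\neg P$. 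Symmetrically, for $(c,a)$ one has $\neg(a<c)=\neg\neg\neg P=\neg P$, and assuming $\neg P$ refutes $\neg\neg P$, hence $a<c$ and every $z<c$, again leaving only vacuous obligations together with $c<b$, which is precisely $\neg P$. The mutual exclusivity of $\neg P$ and $\neg\neg P$, together with the intuitionistic identity $\neg\neg\neg P=\neg P$, is what lets every obligation be discharged without deciding $P$, so that $T$ applies and delivers the weak excluded middle.
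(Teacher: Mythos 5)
Your proposal is correct and matches the paper's proof essentially step for step: the same three-element set $X=\{a,b,c\}$ with $a<b$ unconditional, $a<c$ iff $\neg\neg P$, $c<b$ iff $\neg P$, the same identification of $(b,c)$ and $(c,a)$ as the only nontrivial pairs in verifying that $x\leq_{N}y$ implies $x\leq_{P}y$ (using $\neg\neg\neg P \leftrightarrow \neg P$), and the same final application of cotransitivity to $a<b$ with $z=c$ to extract $\neg\neg P$ or $\neg P$. If anything, your explicit observation that the assumption $\neg(c<b)$ (resp.\ $\neg(a<c)$) renders the clauses involving $c<z$ (resp.\ $z<c$) vacuous is slightly more careful than the paper's remark that those clauses hold ``trivially.''
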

\begin{proof}
Let $P$ be any proposition. Let $X=\left\{ a,b,c\right\} $ with
$a=a$, $b=b$, $c=c$ and with exactly $a<b$, $a<c$ if $\neg\neg P$,
and $c<b$ if $\neg P$. Suppose $x\leq_{N}y$. The two cases worth
mentioning are when $x=b$,$y=c$ and when $x=c$,$y=a$. 

Suppose $b\leq_{N}c$. Let $z\in X$. Also suppose $z<b$. Note that
$b\leq_{N}c$ entails $\neg\neg P$ by definition, so $a<c$ by definition;
hence, when $z=a$, we have $a<b$ implies $a<c$. Note that when
$z=b,c$, ``$z<b$ implies $z<c$'' holds trivially. Now, suppose
$c<z$. Note also that ``$c<z$ implies $b<z$'' holds trivially
for all $z$. Therefore, $b\leq_{P}c$.

Suppose $c\leq_{N}a$. Let $z\in X$. Note that ``$z<c$ implies
$z<a$'' holds trivially. Now, suppose $a<z$. Note that $c\leq_{N}a$
entails $\neg\neg\neg P$, so $\neg P$, implying $c<b$ by definition;
hence, when $z=b$, we have $a<b$ implies $c<b$. Also, note that
``$a<z$ implies $c<z$'' holds trivially, when $z=a,c$. Thus,
$c\leq_{P}a$.

Therefore, the binary relation $<$ on $X$ is cotransitive. Since
$a<b$ by definition, either $a<c$, in which case $\neg\neg P$,
or $c<b$, in which case $\neg P$.\end{proof}
\begin{thm}
Let $X$ be a set with an asymmetric binary relation $<$. If $x\leq_{P}y$
then $x\leq_{N}y$. \label{thm: 3}\end{thm}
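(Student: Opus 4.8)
The plan is to prove the contrapositive-free statement directly: assuming $x\leq_{P}y$, I want to establish $\neg\,y<x$, which is exactly $x\leq_{N}y$. Since the goal is a negation, I may freely assume $y<x$ and aim for a contradiction; this is constructively legitimate.

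The one auxiliary fact I would isolate first is that asymmetry forces irreflexivity. Indeed, instantiating the asymmetry hypothesis ``$u<v$ implies $\neg\,v<u$'' at $u=v=x$ gives $x<x\Rightarrow\neg\,x<x$, so the assumption $x<x$ is self-contradictory and therefore $\neg\,x<x$ holds for every $x\in X$. I would state this as a short preliminary remark, since it is the engine of the argument.

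Next I would unfold the definition of $x\leq_{P}y$ and apply it at a well-chosen witness. Recall that $x\leq_{P}y$ asserts, for all $z\in X$, both $z<x\Rightarrow z<y$ and $y<z\Rightarrow x<z$. Taking $z=x$ in the second clause yields $y<x\Rightarrow x<x$. Hence the assumption $y<x$ produces $x<x$, which contradicts irreflexivity. (Equivalently, one could take $z=y$ in the first clause to get $y<x\Rightarrow y<y$ and contradict $\neg\,y<y$; either instantiation works.) This contradiction discharges the assumption and gives $\neg\,y<x$, i.e. $x\leq_{N}y$.

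The only nonroutine step is the choice of the witness $z=x$ (or $z=y$) inside the definition of $\leq_{P}$; once that instance is selected, the asymmetry-driven irreflexivity closes the argument immediately. I do not anticipate any genuine obstacle here, and in particular no omniscience principle or excluded middle is needed, since we only ever conclude a negated proposition.
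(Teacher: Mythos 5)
Your proof is correct and is essentially the paper's own argument: the paper instantiates the definition of $\leq_{P}$ at $z=y$ in the first clause, obtaining $y<x\Rightarrow y<y$ and contradicting asymmetry, which is precisely the variant you note parenthetically. Your primary choice of witness $z=x$ in the second clause is a trivially symmetric alternative, and your preliminary remark that asymmetry yields irreflexivity is the same engine the paper uses implicitly.
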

\begin{proof}
Suppose $x\leq_{P}y$. If $y<x$, then $y<y$, which is false by asymmetry.\end{proof}
\begin{cor}
Let $X$ be an ordered set as in \cite{key-11}. Then $x\leq_{N}y$
if and only if $x\leq_{P}y$. \label{cor: 4}\end{cor}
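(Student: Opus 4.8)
The plan is to read the corollary as nothing more than the conjunction of Theorems \ref{thm: 1} and \ref{thm: 3} specialized to an ordered set, so that each of the two implications in the biconditional is supplied by exactly one of those theorems. The key observation is that an ordered set in the sense of \cite{key-11} carries, by definition, a relation $<$ that is both asymmetric (condition (1)) and cotransitive (condition (2)). These are precisely the two hypotheses invoked by the earlier results; notably, negative antisymmetry (condition (3)) plays no role and need not be used.

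First I would establish the forward direction. Assuming $x\leq_{N}y$, that is $\neg\,y<x$, I apply Theorem \ref{thm: 1}: since $<$ on $X$ is cotransitive, $x\leq_{N}y$ yields $x\leq_{P}y$. Then I would establish the backward direction. Assuming $x\leq_{P}y$, I apply Theorem \ref{thm: 3}: since $<$ on $X$ is asymmetric, $x\leq_{P}y$ yields $x\leq_{N}y$. Chaining the two implications gives $x\leq_{N}y$ if and only if $x\leq_{P}y$, which is the assertion of the corollary.

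The only point requiring attention, and what I expect to be the main (if modest) obstacle, is confirming that an ordered set meets the cotransitivity hypothesis of Theorem \ref{thm: 1} in the exact form of 2.3, namely that $x<y$ implies $x<z$ or $z<y$. If the ordered-set axiom (2) is read in the form ``$x<y$ implies $x<z$ or $z<x$'', the two statements coincide after a single appeal to transitivity: in the case $z<x$ one chains $z<x<y$ to obtain $z<y$. Since the relation of an ordered set is transitive — whether taken as part of the definition in \cite{key-11} or derived from asymmetry together with cotransitivity — this reconciliation is routine, and with it both hypotheses of the cited theorems are in hand. Once that is settled, the corollary follows immediately.
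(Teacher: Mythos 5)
Your proposal is correct and is essentially identical to the paper's own proof, which consists of exactly the citation ``By Theorems \ref{thm: 1} and \ref{thm: 3}'' --- cotransitivity of an ordered set supplying the hypothesis of the forward direction and asymmetry that of the backward one, with negative antisymmetry unused. Your added care in reconciling the (evidently misprinted) form of axiom (2) in Section 2.4 with the cotransitivity of 2.3 via transitivity is sound, though the paper itself silently reads (2) as cotransitivity, as its proof of Theorem \ref{thm: 6} confirms.
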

\begin{proof}
By Theorems \ref{thm: 1} and \ref{thm: 3}.\end{proof}
\begin{thm}
Any ordered set in \cite{key-11} is a generalized ordered set. \label{thm: 6}\end{thm}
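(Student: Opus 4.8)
The plan is to verify each of the three defining properties of a generalized ordered set—asymmetry, transitivity, and positive antisymmetry—for an arbitrary ordered set $X$ in the sense of \cite{key-11}, drawing on the three properties that such an ordered set enjoys: asymmetry, cotransitivity, and negative antisymmetry.

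Asymmetry is immediate, since it is literally one of the defining conditions of an ordered set, so the first substantive task is transitivity. The definition of an ordered set does not hand us transitivity outright, so I would extract it from asymmetry together with cotransitivity. Concretely, assuming $x<y$ and $y<z$, I would apply cotransitivity to $x<y$ with the point $z$ to obtain $x<z$ or $z<y$; in the second case, $z<y$ together with the hypothesis $y<z$ contradicts asymmetry, so $x<z$ must hold. This establishes $x<y<z$ implies $x<z$.

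For positive antisymmetry, suppose $x\leq_{P}y$ and $y\leq_{P}x$. The idea is to descend from $\leq_{P}$ to $\leq_{N}$ and then invoke negative antisymmetry. Since the relation on an ordered set is asymmetric, Theorem \ref{thm: 3} gives $x\leq_{N}y$ from $x\leq_{P}y$ and $y\leq_{N}x$ from $y\leq_{P}x$; unwinding the definition of $\leq_{N}$, these say $\neg\,y<x$ and $\neg\,x<y$, whence negative antisymmetry yields $x=y$. Equivalently, one may simply cite Corollary \ref{cor: 4}, which already records that $x\leq_{N}y$ and $x\leq_{P}y$ coincide on an ordered set.

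I do not anticipate a serious difficulty here; the single point that calls for a little care—rather than being purely mechanical—is recognizing that transitivity is \emph{not} among the listed axioms of an ordered set and must be recovered from cotransitivity and asymmetry as above, after which it matches the transitivity axiom of a generalized ordered set. Everything else reduces to a direct appeal to the definitions and to Theorem \ref{thm: 3}.
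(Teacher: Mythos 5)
Your proof is correct and takes essentially the same route as the paper's: transitivity is recovered from cotransitivity plus asymmetry (you apply cotransitivity to $x<y$ with the point $z$, while the paper applies it to $y<z$ with the point $x$ --- a symmetric, immaterial variation), and positive antisymmetry is reduced to negative antisymmetry via Theorem \ref{thm: 3} (the paper cites Corollary \ref{cor: 4}, whose relevant direction is exactly Theorem \ref{thm: 3}). No gaps.
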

\begin{proof}
Let $X$ be an ordered set and $x,y,z\in X$. Suppose $x<y<z$. By
cotransitivity, either $y<x$ or $x<z$. But $y<x$ is false by asymmetry,
so $x<z$. Hence, transitivity holds. 

Suppose $x\leq_{P}y$ and $y\leq_{P}x$. Then $x\leq_{N}y$ and $y\leq_{N}x$,
by Corollary \ref{cor: 4}, so $x=y$ by negative antisymmetry. Hence,
positive antisymmetry holds.
\end{proof}
For generalized ordered sets $X$ and $Y$, an \textit{embedding}
$f$ of $X$ into $Y$ is a function from $X$ to $Y$ such that $x<y$
if and only if $f\left(x\right)<f\left(y\right)$. We call $\mathbf{gOrd}$
the category of generalized ordered sets with their embeddings, and
we call $\mathbf{Ord}$ the category of ordered sets with their embeddings.
An embedding of ordered sets is defined as an embedding of generalized
ordered sets.
\begin{cor}
$\mathbf{Ord}$ is a subcategory of $\mathbf{gOrd}$.\end{cor}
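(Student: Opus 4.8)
The plan is to check the three defining conditions for $\mathbf{Ord}$ to be a subcategory of $\mathbf{gOrd}$: that its objects lie among the objects of $\mathbf{gOrd}$, that its hom-sets are included in those of $\mathbf{gOrd}$, and that its identities and composition are inherited. For the objects, I would invoke Theorem~\ref{thm: 6} directly: every ordered set is a generalized ordered set, so each object of $\mathbf{Ord}$ is an object of $\mathbf{gOrd}$.

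For the morphisms, I would appeal to the definition stated just above the statement, namely that an embedding of ordered sets is by definition an embedding of generalized ordered sets. Hence, for ordered sets $X$ and $Y$, every morphism $f\colon X\to Y$ of $\mathbf{Ord}$ is already a morphism of $\mathbf{gOrd}$, which gives the required inclusion of hom-sets.

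Finally, for the categorical structure, I would observe that the identity map $\mathrm{id}_{X}$ on an ordered set $X$ trivially satisfies $x<y$ if and only if $\mathrm{id}_{X}(x)<\mathrm{id}_{X}(y)$, so it is an embedding and agrees with the identity of $X$ formed in $\mathbf{gOrd}$; and that if $f\colon X\to Y$ and $g\colon Y\to Z$ are embeddings, then $x<y$ holds if and only if $f(x)<f(y)$, which holds if and only if $g(f(x))<g(f(y))$, so $g\circ f$ is again an embedding, computed by the same function composition in both categories. Thus identities and composition match, completing the verification. The argument is purely a matter of bookkeeping; the only substantive input is Theorem~\ref{thm: 6}, which carries the actual content of placing the objects of $\mathbf{Ord}$ inside $\mathbf{gOrd}$, so I expect no genuine obstacle beyond this reference.
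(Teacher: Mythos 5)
Your proof is correct and follows essentially the same route as the paper: objects are handled by Theorem~\ref{thm: 6} and morphisms by the definition of embeddings of ordered sets as embeddings of generalized ordered sets. The extra verification of identities and composition is harmless bookkeeping that the paper leaves implicit.
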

\begin{proof}
By Theorem \ref{thm: 6}, the objects of $\mathbf{Ord}$ are objects
of $\mathbf{gOrd}$. The maps of $\mathbf{Ord}$ are those of $\mathbf{gOrd}$,
by definition.
\end{proof}
Not all generalized ordered sets are ordered sets. Let $X=\left\{ a,b,c\right\} $
with a binary relation defined as 
\[
\begin{array}{ccc}
 &  & b\\
 &  & \uparrow\\
c &  & a
\end{array}.
\]
Note that $X$ is a generalized ordered set, but its binary relation
is not cotransitive because $a<b$ but neither $a<c$ nor $c<b$.
We give an example of a generalized ordered set, whose binary relation
$<$ cannot be proved to be cotransitive. Let $\mathsf{Prop}$ be
the collection of all propositions with equality defined as $P=Q$
if $P\iff Q$ and with a binary relation defined as $P<Q$ if $\neg P\wedge Q$.
The proposition ``True'' is denoted by $1$ and the proposition
``False'' by $0$. Observe that $0<1$. 
\begin{thm}
$\mathsf{Prop}$ is a generalized ordered set.\end{thm}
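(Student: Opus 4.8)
The plan is to check the three axioms of a generalized ordered set directly from the definitions, recalling that on $\mathsf{Prop}$ the relation unfolds as $P<Q$ iff $\neg P\wedge Q$, that equality is logical equivalence, and that $0,1$ denote the false and true propositions with $\neg 0=1$. The first two axioms are immediate; the real content lies in positive antisymmetry, where I must compute enough about $\leq_{P}$ on $\mathsf{Prop}$.

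For asymmetry, suppose $P<Q$ and $Q<P$. Then $\neg P\wedge Q$ gives $\neg P$ while $\neg Q\wedge P$ gives $P$, a contradiction; hence $Q<P$ is false. For transitivity, suppose $P<Q<R$. From $P<Q$ I extract $\neg P$ and from $Q<R$ I extract $R$, so $\neg P\wedge R$, that is, $P<R$. (The two hypotheses are in fact jointly contradictory, since they force both $Q$ and $\neg Q$, but this stronger observation is not needed.)

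The main step is positive antisymmetry. By definition $P\leq_{P}Q$ means that for every $z\in\mathsf{Prop}$ both $z<P\Rightarrow z<Q$ and $Q<z\Rightarrow P<z$ hold. The key move is to instantiate the first clause at the witness $z=0$: since $0<P$ is $\neg 0\wedge P$ and $\neg 0=1$, we have $0<P$ iff $P$, and likewise $0<Q$ iff $Q$; hence taking $z=0$ turns the first clause into $P\Rightarrow Q$. Applying the same instantiation to $Q\leq_{P}P$ yields $Q\Rightarrow P$. Combining, $P\iff Q$, which is exactly $P=Q$ by the definition of equality on $\mathsf{Prop}$. Thus positive antisymmetry holds and $\mathsf{Prop}$ is a generalized ordered set.

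I expect the only genuine obstacle to be identifying the correct witness in the universally quantified condition $\leq_{P}$: once one evaluates at $z=0$, the implication $P\Rightarrow Q$ drops out constructively with no appeal to excluded middle. It is worth noting in passing that $\leq_{P}$ here collapses to ordinary implication, since $P\leq_{P}Q$ is equivalent to $P\Rightarrow Q$ (the companion clause $\neg Q\Rightarrow\neg P$ being the constructively valid contrapositive of $P\Rightarrow Q$); but for antisymmetry only the single $z=0$ instantiation is required.
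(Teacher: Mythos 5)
Your proposal is correct and follows essentially the same route as the paper's own proof: the same contradiction $\neg P\wedge P$ for asymmetry, and the identical key move for positive antisymmetry, namely instantiating the universal quantifier in $\leq_{P}$ at $z=0$ to extract $P\Rightarrow Q$ and $Q\Rightarrow P$. The only cosmetic difference is in transitivity, where the paper notes the hypothesis $P<Q<R$ is outright contradictory and concludes vacuously, while you derive $\neg P\wedge R$ directly from the two conjuncts; your closing observation that $\leq_{P}$ coincides with implication on $\mathsf{Prop}$ is also correct, though not needed.
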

\begin{proof}
If $P<Q$, then $\neg P\wedge Q$, and if $Q<P$, then $\neg Q\wedge P$.
But $\neg P\wedge P$ is false. Hence, asymmetry holds.

Note that $P<Q<R$ implies $P<R$ is vacuously true because $P<Q<R$
is always false. Hence, transitivity holds.

Suppose $P\leq_{P}Q$. If $P$, then $0<P$, so $0<Q$, implying $Q$.
Hence, $P\implies Q$. Similarly, $Q\leq_{P}P$ implies $Q\implies P$.
Therefore, $P=Q$. Hence, positive antisymmetry holds.\end{proof}
\begin{thm}
The statement ``the binary relation $<$ on $\mathsf{Prop}$ is cotransitive''
implies the law of excluded middle. \end{thm}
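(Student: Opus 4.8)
The plan is to feed the single known strict inequality $0<1$ into the cotransitivity hypothesis, using an arbitrary proposition as the interpolating element, and then to unwind the definition of $<$ on $\mathsf{Prop}$ to read off the excluded middle. Since cotransitivity quantifies over all three of its arguments, I am free to choose them, and the choice $x=0$, $y=1$ with $z$ taken to be an arbitrary proposition $P$ is exactly what does the work.

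First I would record that $0<1$ holds: by definition $0<1$ means $\neg 0\wedge 1$, i.e.\ $\neg(\text{False})\wedge\text{True}$, which is true. This has already been observed in the text just before the statement.

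Next, I would fix an arbitrary proposition $P$ and apply cotransitivity to the instance $0<1$ with interpolant $P$. This yields $0<P$ or $P<1$, and it remains only to interpret each disjunct. By the definition of the relation, $0<P$ unfolds to $\neg 0\wedge P$, which is equivalent to $P$; and $P<1$ unfolds to $\neg P\wedge 1$, which is equivalent to $\neg P$. Substituting these, the disjunction $0<P$ or $P<1$ becomes precisely $P\vee\neg P$.

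Therefore cotransitivity delivers $P\vee\neg P$ for this arbitrarily chosen $P$, and since $P$ was arbitrary we conclude the law of excluded middle. There is essentially no obstacle here beyond selecting the correct triple $(0,P,1)$: the two equivalences $0<P\iff P$ and $P<1\iff\neg P$ are immediate from the definition of $<$ on $\mathsf{Prop}$, and no case analysis or omniscience principle is invoked in the derivation itself.
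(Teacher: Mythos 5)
Your proposal is correct and is essentially identical to the paper's own proof: both apply cotransitivity to the instance $0<1$ with the arbitrary proposition $P$ as interpolant, and unfold $0<P$ to $P$ and $P<1$ to $\neg P$ to obtain $P\vee\neg P$. Your write-up merely spells out the two definitional equivalences that the paper leaves implicit.
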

\begin{proof}
Let $P$ be any proposition. Since $0<1$, either $0<P$, in which
case $P$, or $P<1$, in which case $\neg P$.
\end{proof}

\subsection{Weak Order on the Real Numbers}

If we define the weak order on the real numbers as $x\leq y$ if $x<y$
or $x=y$, then some theorems for the real numbers, which are known
to be true, will turn out to have no constructive proofs. For instance,
the proposition 
\[
\textrm{for all }x,y\in\mathbb{R},\textrm{ if }\neg y<x\textrm{ then }x\leq y\textrm{ (*)}
\]
 is true for other definitions of $\leq$ on the real numbers. For
instance, when the real numbers are defined as the completion of the
rational numbers, such as in \cite{key-11}, the proposition ({*})
is trivially true since $x\leq y$ \textit{is }$\neg y<x$. The same
goes on as in \cite{key-6-1}. In \cite{key-5}, this proposition
({*}) is also true (see Lemma 2.1.4). But if the weak order is defined
as $x\leq y$ := $x<y$ or $x=y$, then the proposition ({*}) has
no constructive proof. A reason for this problem is that the relation
$<$ on the real numbers cannot be proved to be trichotomous, constructively.
Here are the details.
\begin{thm}
``The binary relation $<$ on $\mathbb{R}$ is trichotomous'' implies
the limited principle of omniscience (LPO), which says for all $\left(a_{n}\right):\mathbf{2}^{\mathbb{N}}$,
either for all $n$, $a_{n}=0$, or there is $n$ such that $a_{n}=1$.
\label{thm: 15-1}\end{thm}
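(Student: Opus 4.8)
The plan is to encode an arbitrary binary sequence as a single real number and then let trichotomy decide the alternative required for LPO. Given $\left(a_{n}\right):\mathbf{2}^{\mathbb{N}}$, I would set $x=\sum_{n=1}^{\infty}a_{n}2^{-n}$. Working with whatever concrete construction of $\mathbb{R}$ is in force (regular or Cauchy sequences of rationals), this series converges and defines a real number with $0\leq x$, since every partial sum lies in $[0,1)$ and the terms are nonnegative. Applying the assumed trichotomy to the pair $x,0$ yields exactly one of $x<0$, $x=0$, or $0<x$, and I would treat these three cases separately, producing in each case one of the two disjuncts of LPO.

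The case $x<0$ contradicts $0\leq x$, so it is vacuous and delivers, by \emph{ex falso}, the disjunct ``for all $n$, $a_{n}=0$''. The case $x=0$ is routine: if some $a_{n}=1$ then $x\geq 2^{-n}>0$, contradicting $x=0$; since equality of binary digits is decidable, $x=0$ forces $a_{n}=0$ for every $n$, which is again the first disjunct.

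The real content lies in the case $0<x$, which I expect to be the main obstacle, since I must extract an explicit index $n$ with $a_{n}=1$ from mere positivity of $x$. Here I would use the constructive meaning of strict positivity: $0<x$ provides a rational $r$ with $0<r<x$. Choosing $N$ with $2^{-N}<r$ and splitting $x=s_{N}+t$ into the finite head $s_{N}=\sum_{n=1}^{N}a_{n}2^{-n}$ and the tail $t=\sum_{n>N}a_{n}2^{-n}$, I bound $t\leq\sum_{n>N}2^{-n}=2^{-N}<r$, whence $s_{N}=x-t>x-r>0$. Thus the finite nonnegative sum $s_{N}$ is strictly positive, so at least one of $a_{1},\dots,a_{N}$ equals $1$; as this is a decidable search over finitely many indices, I can exhibit a concrete $n$ with $a_{n}=1$, the second disjunct of LPO.

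Combining the three cases, trichotomy forces one of the disjuncts of LPO to hold for the arbitrary sequence $\left(a_{n}\right)$, which is exactly LPO. The only delicate point, and the step I would treat most carefully, is the passage from $0<x$ to the rational witness $r$ together with the tail estimate, since everything else is either vacuous or a decidable finite check.
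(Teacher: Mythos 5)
Your proposal is correct and takes essentially the same route as the paper: encode the sequence as $x=\sum_{n}a_{n}2^{-n}$, apply trichotomy to $x$ and $0$, dismiss $x<0$ by nonnegativity of the partial sums, read off the first disjunct of LPO from $x=0$, and extract a witness $a_{k}=1$ from $0<x$ via a strictly positive finite partial sum. The only differences are presentational: the paper first reduces to decreasing binary sequences (so that $x=0$ gives $a_{0}=0$ and hence all terms zero at once), whereas you handle an arbitrary sequence directly using decidability of the digits, and you make explicit the rational witness and tail estimate behind the passage from $0<x$ to a positive partial sum, a step the paper simply asserts.
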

\begin{proof}
Let $\left(a_{n}\right)$ be a decreasing%
\footnote{Note that if each decreasing binary sequence has all zero terms or
has a term equal to $1$, then LPO.%
} binary sequence. Let $x=\sum_{n=0}^{\infty}\frac{a_{n}}{2^{n}}$.
Note that $x$, as a geometric series%
\footnote{For more on geometric series, see \cite{key-3-1}.%
}, converges in $\mathbb{R}$. By assumption, $x<0$, $x>0$, or $x=0$.
If $x=0$, then $a_{0}=0$, so $a_{n}=0$ for all $n$. Since each
$\frac{a_{n}}{2^{n}}$ is nonnegative, each partial sum is nonnegative,
so $x<0$ is impossible. If $0<x$, there is a nonnegative integer
$N$ such that $0<\sum_{n=0}^{n=N}\frac{a_{n}}{2^{n}}$ , so there
is $k$ in $\left\{ 0,\ldots,N\right\} $ such that $a_{k}=1$. Therefore,
LPO.\end{proof}
\begin{thm}
``For all $x,y\in\mathbb{R}$, if $\neg y<x$, then $x<y$ or $x=y$''
implies the limited principle of omniscience (LPO).\textup{ \label{thm: 12}}\end{thm}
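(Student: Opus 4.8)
The plan is to imitate the proof of Theorem \ref{thm: 15-1} almost verbatim, the key observation being that the hypothesis here is exactly the proposition $(*)$ applied to $\mathbb{R}$, and that by the footnote to Theorem \ref{thm: 15-1} it suffices to decide \emph{decreasing} binary sequences in order to obtain LPO. So I would begin by fixing a decreasing binary sequence $(a_n)$ and forming the real number $x=\sum_{n=0}^{\infty}\frac{a_n}{2^{n}}$, which converges since it is dominated by the geometric series $\sum_{n=0}^{\infty}2^{-n}$. Because every summand $\frac{a_n}{2^{n}}$ is nonnegative, every partial sum is nonnegative, and therefore $\neg\,x<0$ holds.

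Next I would feed this fact into the hypothesis. Writing the hypothesis with its bound variables renamed to $u,v$, it reads: if $\neg\,v<u$ then $u<v$ or $u=v$. Instantiating $u=0$ and $v=x$, the antecedent becomes $\neg\,x<0$, which we have just established, so we may conclude $0<x$ or $0=x$. This is the crucial step: using $\neg\,x<0$ to discharge the antecedent yields a genuine binary disjunction in which the troublesome alternative $x<0$ has already been eliminated, so that here we need only dispose of two cases rather than the three that trichotomy produces in Theorem \ref{thm: 15-1}.

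From there the argument is routine and identical in analytic content to that of Theorem \ref{thm: 15-1}. If $0=x$, then in particular $a_0=\frac{a_0}{2^{0}}\le x=0$ forces $a_0=0$, and since $(a_n)$ is decreasing we get $a_n=0$ for all $n$. If instead $0<x$, then there is a nonnegative integer $N$ with $0<\sum_{n=0}^{N}\frac{a_n}{2^{n}}$, whence $a_k=1$ for some $k\in\{0,\ldots,N\}$. In either case the sequence has been decided, which is precisely LPO.

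I expect no substantial obstacle: the only point requiring care is the bookkeeping of matching the universally quantified variables of the hypothesis to the values $0$ and $x$ in the right order, so that $\neg\,x<0$ really is an instance of the antecedent $\neg\,v<u$. Once that instantiation is set up correctly, the convergence of the series and the positive-partial-sum argument are exactly as in Theorem \ref{thm: 15-1}, and nothing beyond the stated hypothesis is used.
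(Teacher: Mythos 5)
Your proposal is correct and follows essentially the same route as the paper's own proof: form $x=\sum_{n=0}^{\infty}\frac{a_n}{2^n}$, observe $\neg\,x<0$ from nonnegativity of the partial sums, instantiate the hypothesis to get $0<x$ or $0=x$, and decide the sequence in each case. The only (inessential) difference is that you restrict to decreasing sequences via the footnote to Theorem \ref{thm: 15-1}, whereas the paper runs the same argument for an arbitrary binary sequence, using decidability of each $a_k=0$ or $a_k=1$ to conclude all terms vanish when $0=x$.
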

\begin{proof}
Let $\left(a_{n}\right)$ be any binary sequence, and let $x=\sum_{n=0}^{\infty}\frac{a_{n}}{2^{n}}$.
Since $x$ is a geometric series with a ratio less than $1$, it converges
in $\mathbb{R}$. Note that it is false $x<0$ since each $\frac{a_{n}}{2^{n}}$
is nonnegative, so either $0<x$ or $0=x$, by our assumption. If
$0<x$, then there is a nonnegative $N$ such that $0<\sum_{n=0}^{N}\frac{a_{n}}{2^{n}}$
, so there must be $k\in\left\{ 0,\ldots,N\right\} $ such that $a_{k}=1$.
If $0=x$, then $a_{n}=0$ for all $n$.
\end{proof}

\section{Connection with Partially Ordered Sets}

Generalized ordered sets are related to partially ordered sets. A
lot more can be found about partially ordered sets on the web and
in \cite{key-21}, but we provide the definition here: a set $X$
with a binary relation $\sim$ is \textit{partially ordered} if, for
all $x,y,z\in X$, (1) $x\sim x$; (2) $x\sim y\sim z$ implies $x\sim y$;
(3) $x\sim y$ and $y\sim x$ imply $x=y$.

Given a generalized ordered set $\left(X,<\right)$, one can create
a partially ordered set as follows. Keep the underlying set $X$ and
take the binary relation to be $\leq_{p}$, which is defined from
the previous section. That $x\leq_{p}x$ for all $x\in X$ is automatic.
Lemma \ref{lem: 1} (1) proves that, for all $x,y,z\in X$, $x\leq_{p}y\leq_{p}z$
implies $x\leq_{p}z$. Lastly, positive antisymmetry, as defined in
the previous section, is the third property.

Getting a generalized ordered set from a partially ordered set is
not immediate. Let $\left(X,\sim\right)$ be a partially ordered set.
We keep the underlying set, and we define the binary relation $<$
to be $x<y$ := $x\sim y$ and $\neg x=y$. We will prove a two-part
lemma for asymmetry and transitivity of $<$, and we will discuss
the problem with positive antisymmetry.
\begin{lem}
\textup{Let $\left(X,\sim\right)$ be a partially ordered set with
a binary relation $<$ defined as $x<y$ := $x\sim y$ and $\neg x=y$.
Then, for all $x,y,z\in X$, \label{lem: 11}}
\begin{enumerate}
\item $x<y$ implies $\neg y<x$;
\item $x<y<z$ implies $x<z$.
\end{enumerate}
\end{lem}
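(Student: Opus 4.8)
The plan is to derive both clauses directly from the antisymmetry axiom of $\sim$, unfolding the definition $x<y \equiv (x\sim y\,\wedge\,\neg\,x=y)$ at each step. Since each conclusion is, or contains, a negation, the natural constructive route is to assume the corresponding positive statement and reduce it to falsity; no appeal to excluded middle or to decidability of equality will be needed, so the argument should go through verbatim in the intended constructive setting.

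For part (1), to establish $\neg\,y<x$ I would assume $y<x$ and derive a contradiction. Unfolding this assumption gives $y\sim x$, while the hypothesis $x<y$ gives $x\sim y$. Antisymmetry of $\sim$ then forces $x=y$, which contradicts the clause $\neg\,x=y$ already contained in $x<y$. Hence $\neg\,y<x$.

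For part (2), I would assume $x<y$ and $y<z$, so that $x\sim y$, $\neg\,x=y$, $y\sim z$, and $\neg\,y=z$ all hold. To conclude $x<z$ I must produce $x\sim z$ together with $\neg\,x=z$. The relation $x\sim z$ is immediate from transitivity of $\sim$ applied to $x\sim y$ and $y\sim z$. For the negation, I would assume $x=z$ and seek a contradiction: substituting into $y\sim z$ yields $y\sim x$, which combined with $x\sim y$ gives $x=y$ by antisymmetry, again contradicting $\neg\,x=y$. This establishes $\neg\,x=z$ and hence $x<z$.

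I do not expect a genuine obstacle in either part: both properties follow formally from transitivity and antisymmetry of $\sim$, and the treatment of the negated conclusions is just the standard constructive pattern of deriving falsity from an assumed positive statement. The real difficulty lies elsewhere, in \emph{positive} antisymmetry, which (as the surrounding discussion signals) does not transfer automatically from a partial order to the derived relation $<$; but that failure is precisely what the paper takes up separately and is outside the scope of this particular lemma.
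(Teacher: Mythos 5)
Your proposal is correct and follows essentially the same route as the paper's proof: part (1) by antisymmetry of $\sim$ contradicting $\neg\,x=y$, and part (2) by transitivity for $x\sim z$ plus a substitution-and-antisymmetry argument for $\neg\,x=z$. The only (immaterial) difference is that in part (2) you substitute $x=z$ into $y\sim z$ and contradict $\neg\,x=y$, whereas the paper substitutes into $x\sim y$ and contradicts $\neg\,y=z$ --- symmetric variants of the same step.
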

\begin{proof}
(1) If $x<y$ and $y<x$, then $x\sim y$ and $y\sim x$, so $x=y$,
by property (3) of a partially ordered set. This is impossible because
$\neg x=y$ from the definition of $x<y$.

(2) If $x<y<z$, then $x\sim y\sim z$, so $x\sim z$ from property
(2) of a partially ordered set. To prove $\neg x=z$. Assume $x=z$,
so $z\sim y$ because $x\sim y$ and $\sim$ is well defined. Since
$y\sim z$, we have $y=z$, which is impossible since $y<z$. Therefore,
$x<z$.
\end{proof}
We now show why positive antisymmetry cannot hold, given an arbitrary
partially ordered set. We start with the set $X=\left\{ a,b\right\} $
with equality defined as $a=a$ and $b=b$, and we impose $\neg a=b$.
The binary relation $\sim$ on $X$ is $a\sim a$ and $b\sim b$.
To see that $\left(X,\sim\right)$ is partially ordered is immediate.
We recall that the binary relation $<$ defined earlier is $x<y$
:= $x\sim y$ and $\neg x=y$; also we recall that the binary relation
$\leq_{p}$ is as defined in the third section. Note that $a\leq_{p}b$.
The reason is as follows. Let $c\in X$ and suppose $c<a$. By definition
of $<$, we have $c\sim a$ and $\neg c=a$. But $c\sim a$ implies
$c$ is $a$ by the definition of $\sim$ on $X$, so $\neg c=a$
must be false. Hence, the statement ``for all $c\in X$, $c<a$ implies
$c<b$'' is always true, vacuously. Similarly, the statement ``for
all $c\in X$, $b<c$ implies $a<c$'' is true. With a similar argument,
we can prove $b\leq_{p}a$. However, as we have defined $X$, it is
false that $a=b$. 

We need to emphasize that we have shown that an arbitrary partially
ordered set $\left(X,\sim\right)$ cannot be turned into a generalized
ordered set with a binary relation $<$ defined as $x<y$ := $x\sim y$
and $\neg x=y$. One question is whether there is any way to define
a generalized order on an arbitrary partially ordered set.

A partial answer is that the binary relation $<$ we define above
is positively antisymmetric for certain classes of partially ordered
sets. Namely they are the partially ordered sets $X$ such that, for
all $x,y\in X$, $\neg y<x$ implies $x\sim y$. In particular, partially
ordered sets with a total order and a decidable equality fall in that
class. A binary relation $\sim$ on set $X$ is \textit{total }if,
for all $x,y\in X$, $x\sim y$ or $y\sim x$, and the equality on
$X$ is \textit{decidable} if, for all $x,y\in X$, $x=y$ or $\neg x=y$.
We show the details with the following:
\begin{thm}
Let $X$ be a partially ordered set, and let the binary relation $<$
be defined as $x<y$ := $x\sim y$ and $\neg x=y$. In addition, let
$X$ satisfy the condition ({*}): for all $x,y\in X$, $\neg y<x$
implies $x\sim y$. Then \label{thm: 12-1}
\begin{enumerate}
\item $x\leq_{p}y$ implies $x\sim y$;
\item positive antisymmetry holds.
\end{enumerate}
\end{thm}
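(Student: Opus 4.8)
The plan is to deduce both parts almost entirely from results already established, since the hypotheses of the theorem have been arranged so that the real work is done elsewhere. For part (1), I would first observe that Lemma \ref{lem: 11}(1) makes the relation $<$ asymmetric, namely $x<y$ implies $\neg y<x$. This is exactly the hypothesis of Theorem \ref{thm: 3}, so that theorem applies and gives: $x\leq_{p}y$ implies $x\leq_{N}y$, that is, $\neg y<x$. The additional condition ({*}) then finishes the job directly, since from $\neg y<x$ it yields $x\sim y$. This proves part (1).

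For part (2), I would apply part (1) twice. Suppose $x\leq_{p}y$ and $y\leq_{p}x$. By part (1), the first hypothesis gives $x\sim y$ and the second gives $y\sim x$. Property (3) of a partially ordered set then forces $x=y$, which is precisely positive antisymmetry.

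The main obstacle is bookkeeping rather than genuine difficulty. One must confirm that $<$ really is asymmetric so that Theorem \ref{thm: 3} is available, and one must keep careful track of the direction of the implication in condition ({*}): it runs from $\neg y<x$ to $x\sim y$, which is exactly the direction produced by the chain $x\leq_{p}y\Rightarrow x\leq_{N}y\Rightarrow\neg y<x$. Once these are aligned, no case split or constructive subtlety remains; in particular the argument invokes no instance of excluded middle.
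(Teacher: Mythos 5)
Your proof is correct and matches the paper's argument exactly: part (1) is obtained by combining Lemma \ref{lem: 11}(1) (asymmetry of $<$) with Theorem \ref{thm: 3} to get $x\leq_{p}y\Rightarrow\neg y<x$, then invoking condition ({*}), and part (2) applies (1) twice together with antisymmetry of $\sim$. Your version merely makes explicit the verification that Theorem \ref{thm: 3}'s asymmetry hypothesis is supplied by Lemma \ref{lem: 11}(1), which the paper leaves implicit.
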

\begin{proof}
(1) It suffices to show $x\leq_{p}y$ implies $\neg y<x$. This follows
from Theorem \ref{thm: 3} and Lemma \ref{lem: 11}(1). Hence, $x\leq_{p}y$
implies $x\sim y$, by the condition ({*}).

(2) Suppose, for all $x,y\in X$, $x\leq_{p}y$ and $y\leq_{p}x$.
Then $x\sim y$ and $y\sim x$, by (1). Thus $x=y$.
\end{proof}
By the way, partially ordered sets with a total order and a decidable
equality are not exactly those in Theorem \ref{thm: 12-1}. Although
the former is always the latter, the converse cannot be proved constructively.
We again show the details with the following:
\begin{thm}
Let $X$ be a partially ordered set, and let the binary relation $<$
be defined as $x<y$ := $x\sim y$ and $\neg x=y$. Then, if $\sim$
is total and $=$ is decidable, then, for all $x,y\in X$, $\neg y<x$
implies $x\sim y$.\end{thm}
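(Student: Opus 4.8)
The plan is to fix $x,y\in X$, assume $\neg y<x$, and derive $x\sim y$ by a short case analysis that uses each of the two hypotheses exactly once. Since the conclusion is $x\sim y$ and the structural facts available are totality of $\sim$ and decidability of $=$, the natural first move is to apply decidability of equality to the pair $x,y$: either $x=y$ or $\neg x=y$.

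In the branch $x=y$ I would invoke reflexivity (property (1) of a partially ordered set, $x\sim x$) together with $x=y$ and the fact that $\sim$ respects $=$ to conclude $x\sim y$. This branch does not use the hypothesis $\neg y<x$ at all. In the branch $\neg x=y$ — equivalently $\neg y=x$, since $=$ is symmetric — I would then apply totality of $\sim$ to split into $x\sim y$ or $y\sim x$. The first alternative is already the desired conclusion. In the second alternative I have $y\sim x$ and $\neg y=x$, which is exactly the definition of $y<x$; this contradicts the standing assumption $\neg y<x$, so in this (impossible) sub-branch $x\sim y$ follows by ex falso. Assembling the branches yields $x\sim y$ in every case.

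I do not expect a real obstacle, but the point to be careful about is precisely where constructivity forces each hypothesis to enter. Decidability of $=$ is essential: if one instead started from totality and landed in the case $y\sim x$, then $\neg y<x$ would only give $\neg\neg\,y=x$, and one cannot strip this double negation to recover $y=x$ (and hence $x\sim y$) without a nonconstructive principle. Splitting on equality \emph{first}, via decidability, is exactly what avoids this trap. Totality is then what lets the $\neg x=y$ branch reach $x\sim y$ outright rather than leaving the comparison undecided. So the theorem is constructively valid precisely because both assumptions are present, and the argument is a finite disjunction elimination rather than any omniscience-style move.
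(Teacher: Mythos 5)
Your proposal is correct and follows essentially the same route as the paper's proof: split on the decidable equality $x=y$ vs.\ $\neg x=y$, use reflexivity and well-definedness of $\sim$ in the first branch, and totality in the second. The only cosmetic difference is that the paper first infers $\neg(y\sim x)$ from $\neg y<x$ and $\neg y=x$ and then applies totality by disjunctive syllogism, whereas you apply totality first and refute the $y\sim x$ sub-branch by ex falso --- the same argument in a slightly different order.
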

\begin{proof}
Suppose $\neg y<x$, meaning $\neg\left(y\sim x\textrm{ and }\neg y=x\right)$.
If $y=x$, then $x\sim y$ since $x\sim x$ and $\sim$ is well defined.
If $\neg y=x$, then $y\sim x$ must be false, so $x\sim y$ since
$\sim$ is total.\end{proof}
\begin{thm}
The statement ``For any partially ordered set $X$, if for all $x,y\in X$,
$\neg y<x$ implies $x\sim y$, then $\sim$ is total and $=$ is
decidable'' implies the weak excluded middle%
\footnote{Weak excluded middle is, for all proposition $P$, either $\neg P$
or $\neg\neg P$.%
}.\end{thm}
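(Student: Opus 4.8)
The plan is to reuse the device from the proof of Theorem~\ref{thm: 4}: assuming the displayed meta-statement, I attach to an arbitrary proposition $P$ a partially ordered set satisfying the hypothesis condition~({*}), and then read weak excluded middle off the conclusion that the meta-statement forces. Concretely, I would take $X=\{a,b\}$ and let the equality encode $P$, by declaring $a=a$ and $b=b$ always, and $a=b$ if and only if $\neg P$ (with $b=a$ forced by symmetry). I would then define the partial order $\sim$ to be reflexive, with $a\sim b$ holding unconditionally and $b\sim a$ holding if and only if $\neg P$.

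The first block of work is to check that $(X,\sim)$ is a genuine partially ordered set in the setoid sense. I would verify that $=$ is an equivalence relation (reflexivity and symmetry are built in, and transitivity holds because every nontrivial instance has a premise forcing $\neg P$ and a conclusion that is then satisfied). I would then verify that $\sim$ is well defined with respect to $=$, which is the one delicate point: whenever $a=b$ holds, i.e.\ whenever $\neg P$, the four values $a\sim a$, $a\sim b$, $b\sim a$, $b\sim b$ must be simultaneously true, and this is exactly why $b\sim a$ was defined to be $\neg P$. Reflexivity of $\sim$ is immediate, transitivity holds because chaining $a$ and $b$ in both directions forces $\neg P$, and antisymmetry holds because $a\sim b$ together with $b\sim a$ yields $\neg P$, hence $a=b$.

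Next I would compute the strict relation $x<y$, defined as $x\sim y$ and $\neg(x=y)$. One finds that $a<b$ is equivalent to $\neg\neg P$ (because $a\sim b$ is automatic and $\neg(a=b)$ is $\neg\neg P$), while $b<a$ is equivalent to $\neg P\wedge\neg\neg P$ and is therefore false. With these values in hand I would verify condition~({*}), namely $\neg(y<x)$ implies $x\sim y$, on each pair: the reflexive pairs are trivial; the pair $(a,b)$ uses that $b<a$ is false together with $a\sim b$ being automatic; and the pair $(b,a)$ reduces to $\neg P$ implies $b\sim a$, which holds by the very definition of $b\sim a$. Hence $X$ satisfies~({*}).

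The conclusion is then immediate: the assumed meta-statement applies to this $X$ and yields, in particular, that $=$ is decidable, so $a=b$ or $\neg(a=b)$; unwinding the definition of $=$, this reads $\neg P$ or $\neg\neg P$, which is weak excluded middle for the arbitrary proposition $P$. I expect the main obstacle to lie entirely in the setoid bookkeeping: the dependence on $P$ must be chosen so that $\sim$ is simultaneously well defined, antisymmetric, and compatible with~({*}), while arranging that it is the \emph{decidability} half of the conclusion that carries the content. Note that the totality half holds unconditionally here, since $a\sim b$ is automatic, so no contradiction arises from that clause; the care is needed to ensure the extracted disjunction is exactly $\neg P\vee\neg\neg P$ and not full excluded middle.
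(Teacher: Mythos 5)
Your proof is correct, but it routes the argument through the opposite half of the conclusion from the paper. The paper's counterexample keeps equality decidable by fiat (imposing $\neg b=a$) and encodes the proposition $P$ in the order itself, setting $a\sim b$ exactly when $\neg P$ and $b\sim a$ exactly when $\neg\neg P$; antisymmetry is then vacuous (since $\neg P\wedge\neg\neg P$ is absurd), the setoid bookkeeping is trivial, condition ({*}) is checked by the computations $\neg\left(a<b\right)\leftrightarrow\neg\neg P$ and $\neg\left(b<a\right)\leftrightarrow\neg P$, and weak excluded middle is read off the \emph{totality} clause, as $a\sim b$ or $b\sim a$ is literally $\neg P$ or $\neg\neg P$ (the decidability clause is inert, being true by construction). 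You instead encode $P$ in the equality ($a=b$ iff $\neg P$), make $\sim$ total by fiat ($a\sim b$ unconditional, $b\sim a$ iff $\neg P$), and extract weak excluded middle from the \emph{decidability} clause, with the totality clause inert; the price is exactly the setoid bookkeeping you identify, since with a non-decidable equality you genuinely must verify that $\sim$ respects $=$ (all four instances of $\sim$ hold under $\neg P$, so this goes through) and antisymmetry is no longer vacuous (it holds because $b\sim a$ gives $\neg P$, which is $a=b$). Your verification of ({*}) is also sound: $a<b$ is $\neg\neg P$, $b<a$ is the absurd $\neg P\wedge\neg\neg P$, and the only non-trivial instance reduces to $\neg P$ implies $b\sim a$. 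Taken together, the two arguments show something slightly stronger than either alone: each conjunct of the conclusion --- totality of $\sim$, and decidability of $=$ --- independently carries the full strength of weak excluded middle, so the theorem would survive weakening its conclusion to either conjunct. One cosmetic remark: the device you say you are reusing from Theorem \ref{thm: 4} (a three-element set with order clauses conditioned on $\neg P$ and $\neg\neg P$) is actually closer in shape to the paper's own two-element proof of this theorem; your construction differs from both in putting the propositional content into the equality rather than the relation.
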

\begin{proof}
Let $P$ be any proposition. Let $X=\left\{ a,b\right\} $ be the
partially ordered set with equality defined as $a=a$ and $b=b$.
We also impose the condition $\neg b=a$. The partial order is defined
as $a\sim a$, $b\sim b$; $a\sim b$ exactly when $\neg P$, and
$b\sim a$ exactly when $\neg\neg P$. The statement ``$\neg y<x$
implies $x\sim y$'' holds trivially when $x,y=a$ and when $x,y=b$,
because $a\sim a$ and $b\sim b$. If $\neg a<b$, then $a\sim b$
must be false because $\neg a=b$, so $\neg\neg P$, implying $b\sim a$.
Finally, if $\neg b<a$, then $b\sim a$ must be false because $\neg b=a$,
so $\neg\neg\neg P$, implying $\neg P$; hence, $a\sim b$. Since
the statement ``for all $x,y\in X$, $\neg y<x$ implies $x\sim y$''
holds, the partial order $\sim$ is total, so $x\sim y$ or $y\sim x$,
implying $\neg P$ or $\neg\neg P$. 
\end{proof}

\section{Orders on Products}

\subsection{Coarse Product}

Given two sets $X,Y$, the cartesian product $X\times Y$ is the set
whose elements are pairs of elements of $X$ and $Y$ and whose equality
is defined as $\left(x,y\right)=\left(x',y'\right)$ if $x=x'$ and
$y=y'$. When $X,Y$ have each an order relation, a typical order
defined on $X\times Y$ is the \textit{lexicographic order}, which
is defined as $\left(x,y\right)<\left(x',y'\right)$ if $x<x'$, or
$x=x'$ and $y<y'$. When $X,Y$ are ordered sets, as defined in 2.4,
there is no constructive proof that $X\times Y$, with the lexicographic
order, is an ordered set. But there are some classes of cartesian
products that are ordered sets when the components are ordered sets.
For instance, we have shown in \cite{key-11} that the cartesian product
with the lexicographic order is an ordered set if the first component
satisfies the property: for all $x,y$, $x=y$, $x<y$, or $y<x$.
Here is why no constructive proof exists for the more general theorem:
\begin{thm}
The statement ``the lexicographic order on the cartesian product
of any two ordered sets is cotransitive'' implies the law of excluded
middle. \label{thm: 15}\end{thm}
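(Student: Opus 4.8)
The plan is to assume the quoted statement and, for an arbitrary proposition $P$, manufacture two ordered sets whose lexicographic product, after cotransitivity is applied to a single carefully chosen triple, decides $P$. So I would fix an arbitrary proposition $P$ and arrange everything so that the conclusion of cotransitivity reads off as $P$ or $\neg P$.

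For the first factor I would take the two-point set $X=\left\{ m,r\right\}$ with $m=m$, $r=r$, $\neg\, m=r$, and strict relation given by $m<r$ exactly when $P$ and $r<m$ exactly when $\neg P$ (all other strict comparisons false). For the second factor I would take the two-element chain $Y=\left\{ 0,1\right\}$ with $0<1$, which is transitive and trichotomous and hence an ordered set by Section 2.3. The step I expect to be the main obstacle is verifying that $X$ itself is an ordered set in the sense of Section 2.4. Asymmetry reduces to $m<r\implies\neg\, r<m$, i.e.\ $P\implies\neg\neg P$, and the symmetric clause $\neg P\implies\neg P$; both hold constructively. Negative antisymmetry reduces, in the only nontrivial case, to the implication whose antecedent is $\neg P\wedge\neg\neg P$, a contradiction, so it holds vacuously. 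Cotransitivity $x<y\implies x<z\vee z<y$ holds because in each case the disjunct $x<y$ is itself available as the hypothesis. The whole construction works precisely because $P$ and $\neg P$ are mutually exclusive, so the problematic cases of the ordered-set axioms never actually arise; this is the delicate point and the part I would write out in full.

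With $X$ and $Y$ in hand, I would apply the hypothesis to the lexicographic order on $X\times Y$ and feed it the triple $A=\left(m,0\right)$, $B=\left(r,0\right)$, $C=\left(m,1\right)$. The decisive design feature is that $A<C$ holds unconditionally: the first coordinates are equal and $0<1$ in $Y$, so the truth of $A<C$ does not depend on $P$ and supplies the trigger for cotransitivity. Cotransitivity then yields $A<B$ or $B<C$. Since $\neg\, m=r$, the lexicographic comparison $A<B$ collapses to its first-coordinate clause $m<r$, that is, to $P$; likewise $B<C$ collapses to $r<m$, that is, to $\neg P$. Hence $A<B\vee B<C$ is exactly $P\vee\neg P$, and the law of excluded middle follows.

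In summary, the only genuine work lies in checking that the proposition-dependent factor $X$ satisfies all three axioms of an ordered set; once that is secured, the remainder is a direct unwinding of the definition of the lexicographic order, and placing the unconditional inequality $0<1$ in the second coordinate is exactly what keeps the trigger $A<C$ independent of $P$ while forcing the output of cotransitivity to decide $P$.
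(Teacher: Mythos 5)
Your proof is correct and takes essentially the same route as the paper: a two-point first factor whose order encodes $P$ versus $\neg P$ (paper: $0<1$ if $P$, $1<0$ if $\neg P$), the chain $\{0,1\}$ as second factor, the unconditional lexicographic inequality with equal first coordinates as the trigger, and a single application of cotransitivity whose two disjuncts collapse (via $\neg\, m=r$) to $P$ and $\neg P$ respectively. The only divergence is bookkeeping: the paper builds $X$ as $\{0\}\cup\{1:P\}\cup\{1:\neg P\}$ and draws its middle element from a conditionally defined subset $S\times Y$, whereas your honest two-element $X$ with $\neg\, m=r$ makes the middle element $(r,0)$ concrete --- a mild simplification of the same idea, not a different argument.
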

\begin{proof}
Let $P$ be any proposition. Let $X$ be $\left\{ 0\right\} \cup\left\{ 1:P\right\} \cup\left\{ 1:\neg P\right\} \subseteq\left\{ 0,1\right\} \subseteq\mathbb{Z}$.
Equality on $X$ is $0=0$ and $1=1$, and the binary relation $<$
on $X$ is defined as $0<1$ if $P$ and $1<0$ if $\neg P$. Let
$Y$ be $\left\{ 0,1\right\} \subseteq\mathbb{Z}$; the binary relation
$<$ on $Y$ is inherited from $\mathbb{Z}$. Let $S$ be the subset
$\left\{ 0\in X:\neg P\right\} \cup\left\{ 1\in X:P\right\} $, so
$S\times Y\subseteq X\times Y$. The cartesian product $X\times Y$
has the lexicographic order, so $\left(0,0\right)<\left(0,1\right)$.
Let $\left(p,q\right)\in S\times Y$. By cotransitivity, either $\left(0,0\right)<\left(p,q\right)$
or $\left(p,q\right)<\left(0,1\right)$. We will refer to the latter
disjuncts as ``first case'' and ``second case''. If the first
case holds, then $0<p$, or $0=p$ and $0<q$. If $0<p$, then $p=1$,
so $P$; if $0=p$, then $0\in S$, so $\neg P$. If the second case
holds, then $p<0$, or $p=0$ and $q<1$. If $p<0$, then $\neg P$;
if $p=0$, then $0\in S$, so $\neg P$.
\end{proof}
A question is whether it is possible to define a cotransitive binary
relation on the cartesian product of any two ordered sets. A partial
answer is that we might need to consider other types of products.
For instance, given any two sets $X,Y$, we call the\textit{ coarse
product} of $X$ and $Y$ the set whose elements are pairs $\left(x,y\right)$,
where $x\in X$ and $y\in Y$, and whose equality is defined as $\left(x,y\right)=\left(x',y'\right)$
if $x=x'$; we also can define the equality as $\left(x,y\right)=\left(x',y'\right)$
if $y=y'$. If we refine the notation to differentiate this product
from the cartesian product, we denote by $X\times_{c}Y$ the coarse
product and by $=_{c,X}$ and $=_{c,Y}$ the first and second equalities,
respectively. 

For any given sets $X,Y$, there is a natural function from $X\times Y$
into $X\times_{c}Y$, namely the function $\left(x,y\right)\mapsto\left(x,y\right)$.
Just for the purpose of illustration, say $X=Y=\left\{ 0,1,2\right\} \subseteq\mathbb{Z}$.
The cartesian product of $X$ and $Y$ has $9$ distinct elements,
while the coarse product of $X$ and $Y$, with either equality, has
$3$ distinct elements. The coarse product of $X$ and $Y$ is an
ordered set if either $X$ is an ordered set or $Y$ is an ordered
set. The binary relation on the coarse product is defined as $\left(x,y\right)<_{X}\left(x',y'\right)$
if $x<x'$ or $\left(x,y\right)<_{Y}\left(x',y'\right)$ if $y<y'$.
As a summary, we have the following:
\begin{thm}
Let $X,Y$ be any sets: 
\begin{enumerate}
\item if $X$ is an ordered set, then $\left(X\times_{c}Y,=_{c,X}\right)$
is an ordered set with $\left(x,y\right)<_{X}\left(x',y'\right)$
if $x<x'$;
\item if $Y$ is an ordered set, then $\left(X\times_{c}Y,=_{c,Y}\right)$
is an ordered set with $\left(x,y\right)<_{Y}\left(x',y'\right)$
if $y<y'$.
\end{enumerate}
\end{thm}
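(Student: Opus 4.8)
The plan is to verify the three defining properties of an ordered set from Section 2.4---asymmetry, cotransitivity, and negative antisymmetry---for the structure $\left(X\times_{c}Y,=_{c,X}\right)$ equipped with the relation $\left(x,y\right)<_{X}\left(x',y'\right)$ if $x<x'$, and then to obtain part (2) by the same argument with the two coordinates interchanged. The guiding observation is that both the equality $=_{c,X}$ and the relation $<_{X}$ depend only on the first coordinate; consequently each axiom on the coarse product collapses to the corresponding axiom for $<$ on $X$.

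First I would record a well-definedness remark: since $\left(x,y\right)=_{c,X}\left(x',y'\right)$ holds exactly when $x=x'$, and $<_{X}$ tests only first coordinates, the relation $<_{X}$ respects $=_{c,X}$ because $<$ respects $=$ on $X$. With this in place, asymmetry is immediate: if $\left(x,y\right)<_{X}\left(x',y'\right)$ then $x<x'$, so $x'<x$ is false by asymmetry of $<$ on $X$, whence $\left(x',y'\right)<_{X}\left(x,y\right)$ is false.

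For cotransitivity, given $\left(x,y\right)<_{X}\left(x',y'\right)$, that is $x<x'$, and an arbitrary $\left(x'',y''\right)\in X\times_{c}Y$, I would apply cotransitivity of $<$ on $X$ to obtain $x<x''$ or $x''<x'$; these translate verbatim into $\left(x,y\right)<_{X}\left(x'',y''\right)$ or $\left(x'',y''\right)<_{X}\left(x',y'\right)$. For negative antisymmetry, suppose both $\left(x,y\right)<_{X}\left(x',y'\right)$ and $\left(x',y'\right)<_{X}\left(x,y\right)$ are false; then $x<x'$ and $x'<x$ are both false, so negative antisymmetry of $X$ yields $x=x'$, i.e.\ $\left(x,y\right)=_{c,X}\left(x',y'\right)$.

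The argument uses only that $X$ is an ordered set, so there is no genuine obstacle; the one point that merits attention is the well-definedness of $<_{X}$ with respect to $=_{c,X}$, which is precisely why the coarse equality (rather than the cartesian equality) is the appropriate choice here---under the cartesian equality the second coordinate would matter and negative antisymmetry would fail. Part (2) is obtained by repeating the three verifications with $Y$, $=_{c,Y}$, and $<_{Y}$ in place of $X$, $=_{c,X}$, and $<_{X}$.
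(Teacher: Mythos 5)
Your proposal is correct, and since the paper states this theorem without any proof (it is presented as a summary remark), your coordinatewise verification of asymmetry, cotransitivity, and negative antisymmetry is exactly the routine argument the paper leaves implicit. You also correctly isolate the one point of substance: negative antisymmetry on $X$ yields only $x=x'$, which is precisely $\left(x,y\right)=_{c,X}\left(x',y'\right)$, so it is the coarse equality (rather than the cartesian one) that makes the statement go through.
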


\subsection{Finite and Infinite Products}

Contrary to the ordered sets, the lexicographic order on the cartesian
product of two generalized ordered sets is a generalized order. We
include the details below. In what follows, when we write $x\leq y$,
we mean $x\leq_{P}y$.
\begin{thm}
The cartesian product of two generalized ordered sets with the lexicographic
order is a generalized ordered set. \label{thm: 13}\end{thm}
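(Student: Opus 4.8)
The plan is to equip $X\times Y$ with the lexicographic relation $<$, where $(x,y)<(x',y')$ means $x<x'$, or $x=x'$ and $y<y'$, and to verify the three defining properties of a generalized ordered set. I would dispatch asymmetry and transitivity first, since both reduce to routine case analysis on the two disjuncts of each lexicographic inequality. For asymmetry, assuming $(x,y)<(x',y')$ and $(x',y')<(x,y)$ and splitting into four cases, each case collapses either to $x<x'$ together with $x'<x$ (contradicting asymmetry in $X$), to $x<x$ via substitution through an equality, or to $y<y'$ together with $y'<y$ (asymmetry in $Y$). Transitivity is handled identically: from $(x,y)<(x',y')<(x'',y'')$ the four cases each yield $x<x''$ (using transitivity in $X$ and substitution through equalities) or $x=x''$ with $y<y''$ (transitivity in $Y$), which is exactly $(x,y)<(x'',y'')$. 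No case requires deciding a disjunction, so these steps are constructive.

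The substantive part is positive antisymmetry. Assuming $(x,y)\leq_{P}(x',y')$ and $(x',y')\leq_{P}(x,y)$, my strategy is to first extract $x\leq_{P}x'$ and $x'\leq_{P}x$ in $X$, apply positive antisymmetry in $X$ to conclude $x=x'$, and then, \emph{using} $x=x'$, extract $y\leq_{P}y'$ and $y'\leq_{P}y$ and apply positive antisymmetry in $Y$. To obtain $x\leq_{P}x'$, suppose $z<x$ and test the hypothesis $(x,y)\leq_{P}(x',y')$ against the point $(z,y')$: since $z<x$ gives $(z,y')<(x,y)$, the hypothesis yields $(z,y')<(x',y')$, that is, $z<x'$ or ($z=x'$ and $y'<y'$). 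The crucial point is that the second disjunct asserts $y'<y'$, refuted by asymmetry in $Y$, so by the constructively valid elimination $(A\vee B)\wedge\neg B\Rightarrow A$ only $z<x'$ survives. Dually, for $x'<z\Rightarrow x<z$ I would test against $(z,y)$, whose reflexive disjunct is the forbidden $y<y$. This establishes $x\leq_{P}x'$, and the symmetric argument gives $x'\leq_{P}x$, hence $x=x'$.

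With $x=x'$ in hand, the same device finishes the second coordinate. To get $y\leq_{P}y'$, suppose $w<y$; then $(x,w)<(x,y)$, and testing $(x,y)\leq_{P}(x',y')$ at $(x,w)$ gives $(x,w)<(x',y')$, i.e. $x<x'$ or ($x=x'$ and $w<y'$). Now $x=x'$ turns the first disjunct into $x<x$, forbidden by asymmetry, so $w<y'$ remains; the companion implication $y'<w\Rightarrow y<w$ is obtained by testing at $(x,w)$ from the other side, where the reflexive disjunct is again $x<x$. Positive antisymmetry in $Y$ then yields $y=y'$, so $(x,y)=(x',y')$, completing the verification.

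The main obstacle is precisely this extraction of the coordinatewise $\leq_{P}$ relations: naively, the product hypothesis delivers only disjunctions such as $z<x'$ or ($z=x'$ and $y'<y'$), and constructively one cannot simply discard the second disjunct. The entire argument hinges on the observation that, by choosing the auxiliary coordinate to be $y'$ or $y$ (and, in the second coordinate, exploiting the already-proved $x=x'$), the unwanted disjunct always takes the reflexive form $t<t$, which asymmetry refutes, leaving the desired inequality without any appeal to excluded middle.
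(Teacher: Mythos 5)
Your proposal is correct and follows essentially the same route as the paper's proof: routine case analysis for asymmetry and transitivity, then extraction of the coordinatewise $\leq_{P}$ relations by testing the product hypotheses at the points $\left(z,y'\right)$, $\left(z,y\right)$, and $\left(x,z\right)$, so that the unwanted disjunct always takes the reflexive form $t<t$ and is refuted by asymmetry. You make explicit the constructively valid disjunction elimination that the paper leaves implicit, but the argument is the same, including the key step of proving $x=x'$ first and then using it to settle the second coordinate.
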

\begin{proof}
Suppose $\left(x,y\right)<\left(x',y'\right)$. Then either $x<x'$,
or $x=x'$ and $y<y'$. If $x<x'$, then $\left(x',y'\right)<\left(x,y\right)$
is false because the binary relation on $X$ is asymmetric. If $x=x'$
and $y<y'$, then $\left(x',y'\right)<\left(x,y\right)$ is false
because the binary relations on $X$ and on $Y$ are asymmetric.

Transitivity of the lexicographic order follows from the well-definedness
and transitivity of the binary relation on $X$ and the transitivity
of the binary relation on $Y$. 

For positive antisymmetry, suppose the following: for all $\left(r,s\right),\left(u,v\right)\in X\times Y$,
\[
\left\{ \begin{array}{c}
\left(r,s\right)<\left(x,y\right)\textrm{ implies }\left(r,s\right)<\left(x',y'\right),\textrm{ and }\\
\left(x',y'\right)<\left(r,s\right)\textrm{ implies }\left(x,y\right)<\left(r,s\right)
\end{array}\right\} (1)
\]
\[
\left\{ \begin{array}{c}
\left(u,v\right)<\left(x',y'\right)\textrm{ implies }\left(u,v\right)<\left(x,y\right),\textrm{ and }\\
\left(x,y\right)<\left(u,v\right)\textrm{ implies }\left(x',y'\right)<\left(u,v\right)
\end{array}\right\} (2).
\]
 To show $x\leq x'$, suppose, for all $z\in X$, $z<x$. Then $\left(z,y'\right)<\left(x,y\right)$,
so $\left(z,y'\right)<\left(x',y'\right)$ by (1); hence, $z<x'$.
Now, if $x'<z$, then $\left(x',y'\right)<\left(z,y\right)$, so $\left(x,y\right)<\left(z,y\right)$
by (1), implying $x<z$. Similarly, $x'\leq x$. Thus $x=x'$. To
show $y\leq y'$, suppose, for all $z\in Y$, $z<y$. Then $\left(x,z\right)<\left(x,y\right)$,
so $\left(x,z\right)<\left(x',y'\right)$. Since $x=x'$, it follows
$z<y'$. Now, if $y'<z$, then $\left(x',y'\right)<\left(x',z\right)$,
so $\left(x,y\right)<\left(x',z\right)$, implying $y<z$ since $x=x'$.
Similarly, $y'\leq y$. Thus $y=y'$.
\end{proof}
For any nonnegative integer $n$, the \textit{lexicographic order}
on the cartesian product $\prod_{i=0}^{n}X_{i}$, where each $X_{i}$
is a generalized ordered set, is $\left(x_{0},\ldots,x_{n}\right)<\left(y_{0},\ldots,y_{n}\right)$
if there is $k\in\left\{ 0,\ldots,n\right\} $ such that $x_{k}<y_{k}$
and, for all $j<k$, $x_{j}=y_{j}$.
\begin{thm}
For each nonnegative integer $n$, the cartesian product $\Pi_{i=0}^{n}X_{i}$,
where each $X_{i}$ is a generalized ordered set, is a generalized
ordered set. \label{thm: 23}\end{thm}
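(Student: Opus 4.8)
The plan is to proceed by induction on $n$, using Theorem \ref{thm: 13} as the engine of the inductive step. For the base case $n=0$, the product $\prod_{i=0}^{0}X_{i}$ is just $X_{0}$, whose lexicographic order reduces to the given binary relation on $X_{0}$ (the only admissible index is $k=0$, for which the condition ``$x_{j}=y_{j}$ for all $j<0$'' is vacuous), so it is a generalized ordered set by hypothesis.

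For the inductive step, suppose $\prod_{i=0}^{n}X_{i}$ is a generalized ordered set. I would identify $\prod_{i=0}^{n+1}X_{i}$ with the cartesian product $\bigl(\prod_{i=0}^{n}X_{i}\bigr)\times X_{n+1}$ via the natural bijection $(x_{0},\ldots,x_{n+1})\mapsto\bigl((x_{0},\ldots,x_{n}),x_{n+1}\bigr)$. Under this identification the componentwise equality on the $(n+2)$-fold product matches the product equality on the pair, since $(x_{0},\ldots,x_{n})=(y_{0},\ldots,y_{n})$ together with $x_{n+1}=y_{n+1}$ holds exactly when all $n+2$ coordinates agree. By the inductive hypothesis $\prod_{i=0}^{n}X_{i}$ is a generalized ordered set and $X_{n+1}$ is one by assumption, so Theorem \ref{thm: 13} makes the pair a generalized ordered set; transporting this structure back along the bijection finishes the step.

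The crux, and the one place that requires genuine checking rather than bookkeeping, is that the binary lexicographic order on $\bigl(\prod_{i=0}^{n}X_{i}\bigr)\times X_{n+1}$ coincides with the $(n+2)$-fold lexicographic order under the above identification. Writing $u=(x_{0},\ldots,x_{n})$ and $v=(y_{0},\ldots,y_{n})$, the binary lexicographic order declares $(u,x_{n+1})<(v,y_{n+1})$ precisely when $u<v$ in $\prod_{i=0}^{n}X_{i}$, or $u=v$ and $x_{n+1}<y_{n+1}$. Unwinding $u<v$ yields an index $k\in\{0,\ldots,n\}$ with $x_{k}<y_{k}$ and $x_{j}=y_{j}$ for $j<k$, while the second disjunct is exactly the case $k=n+1$; together these say there is $k\in\{0,\ldots,n+1\}$ with $x_{k}<y_{k}$ and $x_{j}=y_{j}$ for all $j<k$, which is the definition of the $(n+2)$-fold lexicographic order.

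I expect this verification to go through constructively without any case analysis that invokes excluded middle, since it is a purely definitional rearrangement and never asks us to decide a disjunction. Once the two relations are shown equal, no further work on asymmetry, transitivity, or positive antisymmetry is needed, as these are inherited verbatim from Theorem \ref{thm: 13}.
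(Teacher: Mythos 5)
Your proposal is correct and follows the paper's proof exactly: the paper also proceeds by induction on $n$ with Theorem \ref{thm: 13} supplying the inductive step, though it leaves all details implicit. Your unwinding of the identification $\prod_{i=0}^{n+1}X_{i}\cong\bigl(\prod_{i=0}^{n}X_{i}\bigr)\times X_{n+1}$ and the match between the binary and $(n+2)$-fold lexicographic orders is the right bookkeeping, and it is constructively unproblematic since the case split on the witnessing index $k$ uses only decidable comparison of natural numbers.
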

\begin{proof}
The proof goes by induction on $n$ and then follows from Theorem
\ref{thm: 13}.
\end{proof}
In the language of category theory, we have the following:
\begin{cor}
The category%
\footnote{See Section 3 for the definition of the category of generalized ordered
sets.%
} of generalized ordered sets has all finite products. \label{cor: 15}\end{cor}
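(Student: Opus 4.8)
The plan is to reduce \ref{cor: 15} to the closure results already in hand. A category has all finite products precisely when it has a terminal object (the empty product) and a product of every pair of objects; the general $n$-fold case then follows from the very induction carried out in Theorem \ref{thm: 23}. So I would organize the argument around three tasks: exhibit a candidate terminal object in $\mathbf{gOrd}$, declare the lexicographic product $X \times Y$ of Theorem \ref{thm: 13} to be the candidate binary product equipped with the two coordinate projections, and verify the universal property, namely that every cone factors uniquely through these projections by an embedding. Theorem \ref{thm: 23} already guarantees that the underlying object of any finite product is a generalized ordered set, so the categorical content that remains is entirely about the morphisms.

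The one part I expect to go through cleanly is the factorization map. Given a generalized ordered set $Z$ with embeddings $f\colon Z \to X$ and $g\colon Z \to Y$, I would define $\langle f,g\rangle\colon Z \to X \times Y$ by $z \mapsto (f(z),g(z))$. If $z < w$ in $Z$, then $f(z) < f(w)$, so $(f(z),g(z)) < (f(w),g(w))$ lexicographically through the first coordinate; conversely, if $(f(z),g(z)) < (f(w),g(w))$, then some coordinate witnesses the comparison, and because $f$ and $g$ each reflect $<$ one recovers $z < w$. Hence $\langle f,g\rangle$ is again an embedding, and it is manifestly the unique function commuting with the coordinate maps, so existence and uniqueness of the factorization hold at the level of underlying functions.

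The hard part, and the step I expect to be the genuine obstacle, is the projection side of the universal property together with the terminal object. For the lexicographic order the coordinate projection $\pi_X\colon X \times Y \to X$ preserves $<$ but does not reflect it: the comparison $(x,y) < (x',y')$ may be witnessed by $x = x'$ and $y < y'$, and then $\pi_X(x,y) < \pi_X(x',y')$ fails. Thus $\pi_X$ is not an embedding in the sense fixed for $\mathbf{gOrd}$, so the lexicographic product with its coordinate projections is not, as stated, a product cone; the same difficulty afflicts the terminal object, since a one-point set carries the empty relation and the constant map out of any generalized ordered set having an instance of $<$ cannot reflect that instance. Resolving this is where the substance lies: I would either replace the coordinate projections and the naive one-point object by a cone that genuinely lands in $\mathbf{gOrd}$, or restrict $X \times Y$ to the sub-object on which the two coordinate orders compare consistently, and then recheck that this refined construction still enjoys the closure of Theorem \ref{thm: 23}. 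Until that cone is pinned down, the reduction to Theorems \ref{thm: 13} and \ref{thm: 23} secures only the product \emph{objects}, not the full universal property that \ref{cor: 15} asserts.
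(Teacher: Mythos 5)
Your reading of the corollary is more demanding than the paper's: the paper's entire proof is the single line that the claim ``is immediate from Theorem \ref{thm: 23},'' i.e., it silently interprets ``has all finite products'' as closure of the class of objects under finite lexicographic products, which is exactly the part you concede at the end. The paper never exhibits projections, a terminal object, or a universal property. Within your own program, the positive step is correct: $\langle f,g\rangle$ is an embedding, since if $(f(z),g(z))<(f(w),g(w))$ then either $f(z)<f(w)$ or ($f(z)=f(w)$ and $g(z)<g(w)$), and in either case $z<w$ because $f$ and $g$ reflect $<$; and it is the unique function compatible with the coordinates. Your negative observation is also correct: the projection $\pi_X$ is not a morphism of $\mathbf{gOrd}$, since $(x,y)<(x,y')$ whenever $y<y'$ while $\pi_X$ sends both pairs to $x$, and $x<x$ is refuted by asymmetry.

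In fact your ``hard part'' is not merely unresolved but unresolvable under the strict categorical reading, because $\mathbf{gOrd}$ with embeddings as morphisms has no terminal object at all. Any function from the one-point generalized ordered set into a candidate terminal object $T$ is an embedding (the condition reduces to irreflexivity of $<$ on $T$, which follows from asymmetry), so terminality forces all elements of $T$ to be equal; but then the two-element chain $0<1$, which is an ordered set and hence an object by Theorem \ref{thm: 6}, admits no embedding into $T$, since that would yield $t<t$. This argument is itself constructive. Consequently no choice of cone or sub-object of $X\times Y$ can rescue the empty product without changing the morphisms (say, to monotone maps rather than embeddings), and your proposed repairs cannot succeed as stated. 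The accurate summary is the one you arrive at: Theorems \ref{thm: 13} and \ref{thm: 23} secure the product \emph{objects}, and that weaker closure statement is both all that the paper's one-line proof establishes and, evidently, all that the corollary is intended to assert; relative to that reading, your first paragraph together with Theorem \ref{thm: 23} already constitutes the whole proof.
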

\begin{proof}
The proof is immediate from Theorem \ref{thm: 23}.
\end{proof}
Besides the set of all propositions $\mathsf{Prop}$ we presented
in the third section, we will exhibit a family of nontrivial generalized
ordered sets (see Corollary \ref{cor: 23}). First, we define the
cartesian product of an arbitrary collection of generalized ordered
sets indexed by elements of any ordinal. Instead of Cantor's ordinals
\cite{key-7}, we use ordinals%
\footnote{For more on ordinals, see \cite{key-17}%
} as defined in \cite{key-18}. 

For an ordinal $I$ as defined in \cite{key-18}, let $\mathfrak{g}$
be a function from $I$ to the class of generalized ordered sets,
and we write $X_{i}$ for $\mathfrak{g}\left(i\right)$. An example
of such a $\mathfrak{g}$ is the function that sends each $i\in I$
to $\mathbb{N}$. We write $\prod_{i,\mathfrak{g}}X_{i}$ for the
set of functions $f$ from $I$ to $\cup_{i}X_{i}$ such that $f\left(i\right)\in X_{i}$.
For instance, if each $X_{i}$ has a distinguished element, say $0_{X_{i}}$,
an element of $\prod_{i,\mathfrak{g}}X_{i}$ is the function that
sends each $i$ to $0_{X_{i}}$. If $\mathfrak{g}$ is the constant
function, say $\mathfrak{g}\left(i\right)=X$ for all $i$, then $\prod_{i,\mathfrak{g}}X$
is $X^{I}$, the set of functions from $I$ to $X$. If $A\subseteq I$,
we write $\prod_{i\in A,\mathfrak{g}}X_{i}$ for the set of functions
of $\prod_{i,\mathfrak{g}}X_{i}$ restricted to $A$, where $\mathfrak{g}$
is restricted to $A$. The \textit{lexicographic order} on $\prod_{i,\mathfrak{g}}X_{i}$
is $f<g$ if there is $k\in I$ such that $f\left(k\right)<g\left(k\right)$
and, for all $j<k$, $f\left(j\right)=g\left(j\right)$; in such case,
we say $k$ witnesses $f<g$. The equality on $\prod_{i,\mathfrak{g}}X_{i}$
is pointwise equality. For $A\subseteq I$, the \textit{lexicographic
order} on $\prod_{i\in A,\mathfrak{g}}X_{i}$ is $f<g$ if there is
$k\in A$ such that $f\left(k\right)<g\left(k\right)$ and, for all
$j\in A$ with $j<k$, $f\left(j\right)=g\left(j\right)$.

A \textit{lower subset} $S$ of a generalized ordered set is a set
such that $s'\leq s\in S$ implies $s'\in S$. For an ordinal $I$
as defined in \cite{key-18}, an \textit{initial segment} of $I$
is a lower subset of $I$. 
\begin{thm}
Let $\mathscr{L}$ be any collection of initial segments of $I$.
If, for each finite subcollection $\left\{ A_{0},\ldots,A_{m}\right\} $
of $\mathscr{L}$, $\prod_{i\in\cup_{j=1}^{m}A_{j},\mathfrak{g}}X_{i}$
is a generalized ordered set, then the lexicographic order on $\prod_{i\in\cup\mathscr{L},\mathfrak{g}}X_{i}$
is asymmetric and transitive. If, in addition, the binary relation
$<$ on each $X_{i}$ is cotransitive, then the lexicographic order
on $\prod_{i\in\cup\mathscr{L},\mathfrak{g}}X_{i}$ is positively
antisymmetric. \label{thm: 18}\end{thm}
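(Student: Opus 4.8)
The plan is to reduce everything on the full union $B=\bigcup\mathscr{L}$ to the finite unions $A\cup A'$ supplied by the hypothesis, by means of a locality observation about lexicographic witnesses. First I would record that fact. Fix any initial segment $C$ with $C\subseteq B$ and any $k\in C$. Since $C$ is a lower subset and, as $j<k$ implies $j\leq_{P}k$ by Lemma \ref{lem: 1}(2), initial segments are downward closed under $<$ as well, every $j<k$ already lies in $C$; hence $\{j\in B:j<k\}=\{j\in C:j<k\}$. Consequently, for functions $f,g$ the index $k$ witnesses $f<g$ in $\prod_{i\in B,\mathfrak{g}}X_{i}$ if and only if it witnesses $f|_{C}<g|_{C}$ in $\prod_{i\in C,\mathfrak{g}}X_{i}$. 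I will also use that a union $A\cup A'$ of two initial segments is again an initial segment and is contained in $B$, so by hypothesis $\prod_{i\in A\cup A',\mathfrak{g}}X_{i}$ is a generalized ordered set.

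With this in hand, asymmetry and transitivity are short. For asymmetry, suppose $f<g$ and $g<f$ with witnesses $k,k'\in B$; choose $A,A'\in\mathscr{L}$ with $k\in A$, $k'\in A'$ and set $C=A\cup A'$. Locality moves both witnesses down to $C$, so $f|_{C}<g|_{C}$ and $g|_{C}<f|_{C}$, contradicting asymmetry on $\prod_{i\in C,\mathfrak{g}}X_{i}$. For transitivity, given $f<g<h$ with witnesses $k,k'$, I again pass to $C=A\cup A'$ containing both, restrict, apply transitivity on $C$ to obtain a witness $m\in C$ for $f|_{C}<h|_{C}$, and push $m$ back up to $B$ by locality to conclude $f<h$.

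The substantive part is positive antisymmetry, and this is where cotransitivity of each $X_{i}$ is needed. Suppose $f\leq_{P}g$ and $g\leq_{P}f$. Since the lexicographic order on $B$ is asymmetric by the previous step, Theorem \ref{thm: 3} gives $\neg(g<f)$ and $\neg(f<g)$. I would then prove $f(i)=g(i)$ for every $i\in B$ by transfinite induction on $i$: assuming $f(j)=g(j)$ for all $j<i$, it suffices to show $f(i)\leq_{P}g(i)$ and $g(i)\leq_{P}f(i)$ in $X_{i}$, whence $f(i)=g(i)$ by positive antisymmetry of the generalized ordered set $X_{i}$. To get $f(i)\leq_{P}g(i)$, take $z\in X_{i}$ with $z<f(i)$; cotransitivity of $X_{i}$ yields $z<g(i)$ or $g(i)<f(i)$. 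In the second case the induction hypothesis makes $i$ a witness for $g<f$ in $B$, contradicting $\neg(g<f)$; since that disjunct is thereby excluded, $z<g(i)$. The dual clause $g(i)<z\Rightarrow f(i)<z$ is handled the same way, splitting $g(i)<z$ into $g(i)<f(i)$ or $f(i)<z$ and discarding the first disjunct as before. The argument for $g(i)\leq_{P}f(i)$ is identical with the roles of $f$ and $g$ exchanged, using $\neg(f<g)$.

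The main obstacle is exactly the positive-antisymmetry step: unlike the binary product in Theorem \ref{thm: 13}, here I cannot simply read off the coordinate at which two functions differ, because constructively the lexicographic witness need not be locatable. Cotransitivity is what repairs this, since it lets me branch on each coordinate and then delete the unwanted branch using the negative weak order $\leq_{N}$ furnished by asymmetry (Theorem \ref{thm: 3}). Organizing the coordinatewise comparison as a transfinite induction over $I$, which is legitimate for the ordinals of \cite{key-18}, is what lets the local cotransitivity arguments assemble into the global equality $f=g$.
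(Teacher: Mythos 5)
Your proof is correct and follows essentially the same route as the paper's: reduce asymmetry and transitivity to the finite unions $A\cup A'$ supplied by the hypothesis, then establish positive antisymmetry by transfinite induction on $i$, using cotransitivity of each $X_{i}$ to convert the negations $\neg\left(g<f\right)$ and $\neg\left(f<g\right)$ into coordinatewise $\leq_{P}$ and concluding by positive antisymmetry of $X_{i}$. Your explicit locality lemma about witnesses only spells out a step the paper leaves implicit (pushing the witness from $A\cup A'$ back up to $\cup\mathscr{L}$ in the transitivity step, which indeed needs initial segments to be downward closed under $<$), and your inlined branch-and-discard argument on each coordinate is just the proof of Theorem \ref{thm: 1} unfolded rather than cited.
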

\begin{proof}
For asymmetry, suppose $f<g$ and $g<f$ in $\prod_{i\in\cup\mathscr{L},\mathfrak{g}}X_{i}$.
Since $k$ witnesses $f<g$, $f\left(k\right)<g\left(k\right)$, and
since $k'$ witnesses $g<f$, $g\left(k'\right)<f\left(k'\right)$.
Since $k$ is in some $A\in\mathscr{L}$ and $k'$ in some $A'\in\mathscr{L}$,
it follows $\prod_{i\in A\cup A',\mathfrak{g}}X_{i}$ is a generalized
ordered set by supposition. Since $k,k'\in A\cup A'$, it follows
$f<g$ and $g<f$ in $\prod_{i\in A\cup A',\mathfrak{g}}X_{i}$, when
$f$ and $g$ are restricted to $A\cup A'$. But that is impossible
since the binary relation on $\prod_{i\in A\cup A',\mathfrak{g}}X_{i}$
is asymmetric.

For transitivity, suppose $f<g<h$ in $\prod_{i\in\cup\mathscr{L},\mathfrak{g}}X_{i}$.
Then $k$ witnesses $f<g$ and $r$ witnesses $g<h$, where $k$ is
in some $A$ and $r$ in some $A'$. Hence, $f<h$ in $\prod_{i\in A\cup A',\mathfrak{g}}X_{i}$.
Thus, $f<h$ in $\prod_{i\in\cup\mathscr{L},\mathfrak{g}}X_{i}$.

Now assume the binary relation on each $X_{i}$ is cotransitive. Let
$f,g\in\prod_{i\in\cup\mathscr{L},\mathfrak{g}}X_{i}$. Suppose for
all $h,h'\in\prod_{i,\mathfrak{g}}X_{i}$, $h<f$ implies $h<g$,
and $h'<g$ implies $h'<f$. To show for all $i\in\cup\mathscr{L}$,
$f\left(i\right)=g\left(i\right)$, we use induction on $i$. Suppose
for all $j<i$, $f\left(j\right)=g\left(j\right)$. If $g\left(i\right)<f\left(i\right)$,
then $g<f$, so $g<g$, which is false by asymmetry. Hence, $\neg g\left(i\right)<f\left(i\right)$,
so $f\left(i\right)\leq g\left(i\right)$ by Theorem \ref{thm: 1}.
Similarly, $\neg f\left(i\right)<g\left(i\right)$, so $g\left(i\right)\leq f\left(i\right)$.
Hence, $f\left(i\right)=g\left(i\right)$, by positive antisymmetry
on $X_{i}$.\end{proof}
\begin{cor}
For any ordered set $X$, the set $X^{\mathbb{N}}$ with the lexicographic
order is a generalized ordered set. \label{cor: 23}\end{cor}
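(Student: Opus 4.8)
The plan is to obtain the result as an immediate application of Theorem \ref{thm: 18}, taking the index ordinal to be $I=\mathbb{N}$ and the function $\mathfrak{g}$ to be constant with value $X$, so that $\prod_{i,\mathfrak{g}}X$ is exactly $X^{\mathbb{N}}$. For the collection $\mathscr{L}$ I would take all finite initial segments of $\mathbb{N}$, that is, the sets $[0,n)=\{0,1,\ldots,n-1\}$ for $n\in\mathbb{N}$. Each such set is a lower subset of $\mathbb{N}$, hence an initial segment in the sense of \cite{key-18}, and since every $k\in\mathbb{N}$ lies in $[0,k+1)$ we have $\cup\mathscr{L}=\mathbb{N}$, so that $\prod_{i\in\cup\mathscr{L},\mathfrak{g}}X=X^{\mathbb{N}}$.

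The first thing to verify is the finite-subcollection hypothesis of Theorem \ref{thm: 18}. Given finitely many members $[0,n_{0}),\ldots,[0,n_{m})$ of $\mathscr{L}$, their union is the single finite initial segment $[0,N)$ with $N=\max_{j}n_{j}$. The restricted product $\prod_{i\in[0,N),\mathfrak{g}}X$ is the $N$-fold cartesian power of $X$, and its lexicographic order (as defined for restricted products) coincides with the lexicographic order on the finite product $\prod_{i=0}^{N-1}X$. Since $X$ is an ordered set, it is a generalized ordered set by Theorem \ref{thm: 6}, and then Theorem \ref{thm: 23} shows $\prod_{i=0}^{N-1}X$ is a generalized ordered set. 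Thus the hypothesis holds, and Theorem \ref{thm: 18} gives that the lexicographic order on $X^{\mathbb{N}}$ is asymmetric and transitive.

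It remains to secure positive antisymmetry, which is exactly the second conclusion of Theorem \ref{thm: 18}, available once the relation $<$ on each factor is cotransitive. Here each factor is $X$, and the binary relation of an ordered set is cotransitive by the definition recalled in 2.4 (condition (2)). Hence the additional hypothesis is met and the lexicographic order on $X^{\mathbb{N}}$ is positively antisymmetric. Combining the three properties, $X^{\mathbb{N}}$ with the lexicographic order is a generalized ordered set.

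I expect the only genuinely delicate point to be the bookkeeping in the second paragraph: confirming that a finite union of finite initial segments is again a finite initial segment, and that the lexicographic order on the product restricted to $[0,N)$ is literally the finite-product lexicographic order of Theorem \ref{thm: 23} rather than merely order-isomorphic to it. Everything else is a direct instantiation of Theorem \ref{thm: 18}; no new constructive difficulty arises, since cotransitivity of each factor is built into the hypothesis that $X$ is an ordered set.
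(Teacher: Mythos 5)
Your proposal is correct and follows essentially the same route as the paper's own proof: both take $\mathscr{L}$ to be the collection of finite initial segments of $\mathbb{N}$, verify the finite-subcollection hypothesis via Theorems \ref{thm: 6} and \ref{thm: 23}, and invoke Theorem \ref{thm: 18} together with the cotransitivity built into the definition of an ordered set to obtain positive antisymmetry. The only difference is that you spell out the bookkeeping (finite unions of segments are segments; the restricted lexicographic order is literally the finite-product one) that the paper leaves implicit.
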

\begin{proof}
Let $X$ be an ordered set. Recall that any ordered set is a generalized
ordered set, by Theorem \ref{thm: 6}, so $X$ is a generalized ordered
set. Let $\mathscr{L}$ be the collection $\left\{ \left\{ 0,\ldots,n\right\} :n\in\mathbb{N}\right\} $.
By Theorem \ref{thm: 23}, for each finite subcollection $\left\{ A_{0},\ldots,A_{m}\right\} $
of $\mathscr{L}$, $\prod_{i\in\cup_{j=1}^{m}A_{j}}X_{i}$ is a generalized
ordered set, where $X_{i}=X$ for each $i$. Since $\cup_{n\in\mathbb{N}}\left\{ 0,\ldots,n\right\} =\mathbb{N}$
and since the binary relation $<$ on $X$ is cotransitive, it follows
$X^{\mathbb{N}}$ is a generalized ordered set by Theorem \ref{thm: 18}.
\end{proof}
Another binary relation%
\footnote{This definition is due to Michael Shulman.%
} can be defined on the cartesian product of two generalized ordered
sets. This binary relation seems to have the same peculiarity as the
lexicographic order on the cartesian product of ordered sets%
\footnote{See the first subsection of this section.%
}, namely it is known to be a generalized order when the first component
of the product satisfies the discreteness condition, that is, for
all $x,y$, $x<y$, $x=y$, or $y<x$. One question is whether that
discreteness is needed. We use discreteness to prove that relation
is transitive for the product (see the proof of Theorem \ref{thm: 29}).
Another question is whether there is any constructive proof that this
relation is transitive on the cartesian product of any two generalized
ordered sets. We have Theorem \ref{thm: 27} that suggests there may
be no such proof. Furthermore, we show that this relation is exactly
the lexicographic order when discreteness holds for the first component
(see Theorem \ref{thm: 19}). Here are the details.

For any sets $X$ and $Y$ with binary relations $<$, the \textit{weak
lexicographic order} on the cartesian product $X\times Y$ is $\left(x,y\right)<_{w}\left(x',y'\right)$
if 
\begin{enumerate}
\item $x\leq x'$;
\item $x=x'$ implies $y<y'$;
\item $\neg x=x'$ implies $x<x'$.\end{enumerate}
\begin{thm}
Let $X,Y$ be sets with asymmetric and positively antisymmetric binary
relations $<$. Then the weak lexicographic order on $X\times Y$
is asymmetric and positively antisymmetric. \label{thm: 27-1}\end{thm}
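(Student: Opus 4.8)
The plan is to verify the two clauses separately and to reduce each to the corresponding property of the component relations on $X$ and $Y$. Throughout I will use that condition (1) in the definition of $<_{w}$ is literally $x\leq_{P}x'$, together with two earlier facts: Lemma \ref{lem: 1}(2), that $a<b$ implies $a\leq_{P}b$, and Theorem \ref{thm: 3}, that $a\leq_{P}b$ implies $\neg b<a$. Asymmetry is the quick half. Supposing $(x,y)<_{w}(x',y')$ and $(x',y')<_{w}(x,y)$, I read off condition (1) of each to get $x\leq_{P}x'$ and $x'\leq_{P}x$, so $x=x'$ by positive antisymmetry on $X$. Then condition (2) of each product inequality fires, since its hypothesis $x=x'$ now holds, yielding $y<y'$ and $y'<y$, which contradicts asymmetry on $Y$. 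Hence $<_{w}$ is asymmetric.

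For positive antisymmetry, suppose $(x,y)\leq_{P}(x',y')$ and $(x',y')\leq_{P}(x,y)$; the goal is $x=x'$ and $y=y'$. I would first obtain $x=x'$ by proving $x\leq_{P}x'$ and, symmetrically, $x'\leq_{P}x$, then invoking positive antisymmetry on $X$. To prove $x\leq_{P}x'$ I take an arbitrary $z\in X$ and argue the two halves of $\leq_{P}$ by feeding carefully chosen test elements into the hypotheses. For the first half, assume $z<x$; I use the test element $(z,y')$, claim $(z,y')<_{w}(x,y)$, push it through the below-preserving clause of $(x,y)\leq_{P}(x',y')$ to get $(z,y')<_{w}(x',y')$, and then exploit the second coordinate: condition (2) of $(z,y')<_{w}(x',y')$ reads $z=x'\Rightarrow y'<y'$, and since $y'<y'$ is false by asymmetry, this forces $\neg z=x'$, whereupon condition (3) delivers $z<x'$. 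The second half, $x'<z\Rightarrow x<z$, is symmetric: from $x'<z$ I want $(x',y')<_{w}(z,y)$, push it through the above-preserving clause to get $(x,y)<_{w}(z,y)$, and use $x=z\Rightarrow y<y$ to force $\neg x=z$ and hence $x<z$. Running the same scheme with the roles of $(x,y)$ and $(x',y')$ exchanged gives $x'\leq_{P}x$, so $x=x'$.

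Once $x=x'$ is in hand the second coordinate is easy, because now every needed test element can be taken with fixed first coordinate $x$, which makes condition (1) automatic by reflexivity of $\leq_{P}$. To prove $y\leq_{P}y'$: if $w<y$ then $(x,w)<_{w}(x,y)$ holds trivially, so $(x,w)<_{w}(x',y')$ by the product $\leq_{P}$, and since $x=x'$ condition (2) gives $w<y'$; if $y'<w$ then $(x',y')<_{w}(x',w)$ holds trivially, so $(x,y)<_{w}(x',w)$, and $x=x'$ again makes condition (2) yield $y<w$. Thus $y\leq_{P}y'$, and symmetrically $y'\leq_{P}y$, so $y=y'$ by positive antisymmetry on $Y$, completing $(x,y)=(x',y')$.

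The step I expect to be the real obstacle is the seemingly innocuous claim, in the first-coordinate argument, that the test elements $(z,y')$ and $(z,y)$ genuinely lie in the relation $<_{w}$ — specifically, verifying condition (1), namely $z\leq_{P}x$ from $z<x$ (and $x'\leq_{P}z$ from $x'<z$). This is exactly Lemma \ref{lem: 1}(2), and that lemma is available only because the component relations are transitive; asymmetry and positive antisymmetry by themselves do not give $z<x\Rightarrow z\leq_{P}x$, so the entire first-coordinate argument rests on this hidden use of transitivity. Consequently the argument should be carried out with $X$ and $Y$ taken to be generalized ordered sets, where Lemma \ref{lem: 1}(2) applies; the second-coordinate argument, by contrast, uses only reflexivity and positive antisymmetry and needs no such input.
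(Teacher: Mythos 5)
Your proof is, step for step, the paper's own: the same reading of clause (1) as $x\leq_{P}x'$, the same test pairs $\left(z,y'\right)$, $\left(z,y\right)$, $\left(x,z\right)$, $\left(x',z\right)$, the same use of clause (2) to force $\neg z=x'$ and of clause (3) to then extract $z<x'$, and the same two-stage order (first coordinate, then second coordinate with $x=x'$ in hand). On approach there is nothing separating the two.

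The obstacle you flag at the end, however, is not merely a caution about your own write-up: it is a genuine gap in the theorem as stated, and the paper's proof commits exactly the step you identify. Verifying $\left(z,y'\right)<_{w}\left(x,y\right)$ from $z<x$ (and $\left(x',y'\right)<_{w}\left(z,y\right)$ from $x'<z$) requires clause (1), namely $z\leq_{P}x$, which is Lemma \ref{lem: 1}(2), whose proof uses transitivity; asymmetry and positive antisymmetry alone do not supply it. Indeed, the positive-antisymmetry half of the statement is false under the stated hypotheses. Take $X=\left\{ a,b,c\right\} $ with exactly $a<b$ and $b<c$: this is asymmetric, and a direct check shows the only instance of $\leq_{P}$ between distinct elements is $a\leq_{P}c$ (for example, $a\leq_{P}b$ fails because $b<c$ but $\neg a<c$), so positive antisymmetry holds vacuously. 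Take $Y$ to be a one-element set $\left\{ 0\right\} $ with the empty relation. Then $<_{w}$ on $X\times Y$ is empty: for pairs with equal first coordinates, clause (2) demands an instance of $<$ in $Y$, which does not exist; for distinct first coordinates, clause (1) leaves only $u=a$, $v=c$, where clause (3) demands $a<c$, which fails. With $<_{w}$ empty, the relation $\leq_{P}$ computed from $<_{w}$ holds vacuously between all pairs, so positive antisymmetry fails, e.g. for $\left(a,0\right)$ and $\left(b,0\right)$. (The asymmetry half needs only the stated hypotheses, as your first paragraph shows.) Your proposed repair is the right one: assume $X$ and $Y$ are generalized ordered sets, or at least add transitivity, so that Lemma \ref{lem: 1}(2) applies; this costs nothing downstream, since the paper invokes Theorem \ref{thm: 27-1} only in Theorem \ref{thm: 29}, where both factors are generalized ordered sets.
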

\begin{proof}
Suppose $\left(x,y\right)<_{w}\left(x',y'\right)$ and $\left(x',y'\right)<_{w}\left(x,y\right)$.
Then $x\leq x'$ and $x'\leq x$, so $x=x'$ by positive antisymmetry.
Hence, $y<y'$ and $y'<y$, which is false by asymmetry.

For positive antisymmetry, suppose the following: for all $\left(r,s\right),\left(u,v\right)\in X\times Y$,
\[
\left\{ \begin{array}{c}
\left(r,s\right)<_{w}\left(x,y\right)\textrm{ implies }\left(r,s\right)<_{w}\left(x',y'\right),\textrm{ and }\\
\left(x',y'\right)<_{w}\left(r,s\right)\textrm{ implies }\left(x,y\right)<_{w}\left(r,s\right)
\end{array}\right\} (1)
\]
\[
\left\{ \begin{array}{c}
\left(u,v\right)<_{w}\left(x',y'\right)\textrm{ implies }\left(u,v\right)<_{w}\left(x,y\right),\textrm{ and }\\
\left(x,y\right)<_{w}\left(u,v\right)\textrm{ implies }\left(x',y'\right)<_{w}\left(u,v\right)
\end{array}\right\} (2).
\]
For all $z\in X$, if $z<x$, then $\left(z,y'\right)<_{w}\left(x,y\right)$,
so $\left(z,y'\right)<_{w}\left(x',y'\right)$ by (1). Note that $\neg z=x'$
because $z=x'$ implies $y'<y'$, which is false, so $z<x'$. Now,
if $x'<z$, then $\left(x',y'\right)<_{w}\left(z,y\right)$, so $\left(x,y\right)<_{w}\left(z,y\right)$
by (1), implying $x<z$ since $\neg x=z$. Hence, $x\leq x'$. Similarly,
$x'\leq x$. Therefore, $x=x'$. For all $z\in Y$, if $z<y$, then
$\left(x,z\right)<_{w}\left(x,y\right)$, so $\left(x,z\right)<_{w}\left(x',y'\right)$.
Since $x=x'$, it follows $z<y'$. Now, if $y'<z$, then $\left(x',y'\right)<_{w}\left(x',z\right)$,
so $\left(x,y\right)<_{w}\left(x',z\right)$, implying $y<z$ since
$x=x'$. Hence, $y\leq y'$. Similarly, $y'\leq y$. Therefore, $y=y'$.\end{proof}
\begin{thm}
The statement ``for any set $X$ with an asymmetric binary relation
$<$ and for all $x,x'\in X$, $\neg x=x'$ and $x\leq x'$ imply
$x<x'$'' implies the law of double negation%
\footnote{The law of double negation says, for all proposition $P$, if $\neg\neg P$,
then $P$.%
}.\label{thm: 27}\end{thm}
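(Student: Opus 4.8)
The plan is to mimic the metatheorem strategy of Theorem \ref{thm: 4}: from an arbitrary proposition $P$, I would build a small set carrying an asymmetric relation on which the quoted property, once assumed, forces $\neg\neg P\implies P$. Concretely, fix a proposition $P$ and let $X=\{a,b,c\}$ with equality given by $a=a$, $b=b$, $c=c$, so that the three elements are pairwise distinct and in particular $\neg a=b$ holds. I would then define the relation by declaring $c<a$ to hold unconditionally, $c<b$ to hold exactly when $\neg\neg P$, and $a<b$ to hold exactly when $P$, with no other pairs related.

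First I would check that $<$ is asymmetric with no assumption on $P$: the only pairs that can ever be related are $(c,a)$, $(c,b)$, and $(a,b)$, and none of their reverses is ever declared, so $x<y$ and $y<x$ never hold together. This makes $X$ an admissible instance of the quoted statement. Next, assuming $\neg\neg P$, I would verify $a\leq_{P}b$ straight from the definition of $\leq_{P}$: for the first clause, the only $z$ with $z<a$ is $z=c$, and then $c<b$ holds because $\neg\neg P$ is assumed; for the second clause, no $z$ satisfies $b<z$, so it is vacuously true. Hence $a\leq_{P}b$.

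With $\neg a=b$ and $a\leq_{P}b$ in hand, applying the quoted property to the pair $(a,b)$ yields $a<b$, and since $a<b$ was declared to hold exactly when $P$, we obtain $P$. As $\neg\neg P$ was the only hypothesis used, this gives $\neg\neg P\implies P$, and since $P$ is arbitrary, the law of double negation follows.

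The delicate point is the auxiliary element $c$, and I expect it to be the crux of the construction. Its role is to keep $a\leq_{P}b$ from being a triviality: if one omitted $c$, then $a\leq_{P}b$ would hold unconditionally and the argument would produce $P$ outright for every $P$, which is inconsistent rather than a derivation of double negation. By forcing $c<a$ always but $c<b$ only under $\neg\neg P$, the relation places exactly one obstruction in the way of $a\leq_{P}b$, surmountable precisely when $\neg\neg P$ holds; this is what calibrates the construction to yield $\neg\neg P\implies P$ and nothing stronger.
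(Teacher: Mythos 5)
Your proof is correct, and it establishes the theorem by a genuinely different construction from the paper's. The paper uses a two-element set $X=\left\{ a,b\right\} $ in which the \emph{equality} is made proposition-dependent: exactly $a<b$ if $P$ and $a=b$ if $\neg P$. There $a\leq_{P}b$ holds unconditionally and vacuously (no $z$ satisfies $z<a$ or $b<z$), and the hypothesis $\neg\neg P$ is spent on deriving $\neg a=b$; the quoted property then gives $a<b$, hence $P$. You instead keep the equality fixed and decidable (three pairwise distinct elements) and push all dependence on $P$ into the \emph{relation}, so that $\neg a=b$ comes for free and it is $a\leq_{P}b$ that becomes available exactly under $\neg\neg P$, via the auxiliary element $c$ with $c<a$ unconditional and $c<b$ precisely when $\neg\neg P$. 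This mirrors the proof of Theorem \ref{thm: 4}, as you intend; what it buys is an instance with honest, decidable equality, at the price of one extra element and a nonvacuous verification of $\leq_{P}$, whereas the paper buys brevity at the price of a set whose equality is undecidable. One correction to your closing paragraph: omitting $c$ would not break the proof. With $X=\left\{ a,b\right\} $ discrete and $a<b$ exactly when $P$, the quoted statement yields $P$ for arbitrary $P$; in particular, taking $P$ false, the empty relation is asymmetric, $a\leq_{P}b$ holds vacuously, and $\neg a=b$, yet $a<b$ fails, so the quoted statement is outright refutable. A refutable statement implies every proposition, in particular the law of double negation, so the two-point discrete instance already proves the theorem (indeed something stronger). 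Your element $c$ is therefore a calibration that makes the derivation land exactly on double negation rather than on absurdity, not a logical necessity; the same observation shows that the discreteness hypothesis of Lemma \ref{lem: 16} is doing real work, since the empty relation on two distinct points is not discrete.
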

\begin{proof}
Let $P$ be any proposition. Let $X=\left\{ a,b\right\} $ with exactly
$a<b$ if $P$ and $a=b$ if $\neg P$. Suppose $\neg\neg P$. Then
$\neg a=b$. Note that $a\leq b$, trivially. Hence, $a<b$. Therefore,
$P$.\end{proof}
\begin{lem}
Let $X$ be a set with an asymmetric binary relation $<$. If $X$
is discrete, then, for all $x,x'\in X$, $\neg x=x'$ and $x\leq x'$
imply $x<x'$. \label{lem: 16}\end{lem}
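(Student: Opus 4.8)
The plan is to exploit the discreteness hypothesis to perform a constructive three-way case split and then eliminate the two unwanted alternatives. Recall that here $x\leq x'$ abbreviates $x\leq_{P}x'$, that discreteness of $X$ means for all $x,x'\in X$ we have $x<x'$, $x=x'$, or $x'<x$, and that $x\leq_{N}x'$ is by definition $\neg x'<x$.

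First I would convert the hypothesis $x\leq_{P}x'$ into the negative statement $\neg x'<x$. Since $<$ is asymmetric, this is immediate from Theorem \ref{thm: 3}, which gives $x\leq_{N}x'$, and $x\leq_{N}x'$ is precisely $\neg x'<x$. Next I would invoke discreteness to obtain the disjunction $x<x'$ or $x=x'$ or $x'<x$. The middle case is ruled out directly by the standing hypothesis $\neg x=x'$, and the third case $x'<x$ is ruled out by the $\neg x'<x$ just obtained. The only surviving alternative is $x<x'$, which is exactly the desired conclusion.

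There is no genuine obstacle here; the entire content lies in recognizing that discreteness is precisely what licenses the constructive trichotomy split. This should be read against Theorem \ref{thm: 27}, where the \emph{same} implication \emph{without} the discreteness assumption is shown to entail the law of double negation. In effect, discreteness supplies constructively the disjunction that, in the general setting, one could only hope to extract from $\neg\neg(x<x')$ by an illegitimate appeal to double negation elimination.
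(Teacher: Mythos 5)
Your proof is correct and follows the paper's own argument exactly: apply Theorem \ref{thm: 3} (using asymmetry) to turn $x\leq_{P}x'$ into $\neg x'<x$, then let discreteness's trichotomy eliminate the cases $x=x'$ (by hypothesis) and $x'<x$, leaving $x<x'$. Your closing remark relating this to Theorem \ref{thm: 27} is a nice observation, but the core argument is the same as the paper's.
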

\begin{proof}
Since $x\leq x'$, it follows $\neg x'<x$, by Theorem \ref{thm: 3}.
Hence, $x<x'$, since $X$ is discrete.\end{proof}
\begin{thm}
Let $X$ be a discrete generalized ordered set and $Y$ any generalized
ordered set. Then $X\times Y$, with the weak lexicographic order,
is a generalized ordered set. \label{thm: 29}\end{thm}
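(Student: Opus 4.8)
The plan is to notice that two of the three axioms are already available, so that the real content of the theorem is transitivity, and transitivity is exactly where discreteness of $X$ must be used.

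First, since $X$ and $Y$ are generalized ordered sets, their binary relations are in particular asymmetric and positively antisymmetric. Thus Theorem~\ref{thm: 27-1} applies directly and shows that the weak lexicographic order $<_w$ on $X\times Y$ is asymmetric and positively antisymmetric. It therefore remains only to prove transitivity.

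For transitivity I would assume $(x,y)<_w(x',y')$ and $(x',y')<_w(x'',y'')$, unwind these into $x\leq x'$, $x'\leq x''$ together with the four conditional clauses coming from the definition of $<_w$, and then verify each of the three defining clauses of $(x,y)<_w(x'',y'')$ in turn. Clause~(1), namely $x\leq x''$, is immediate from transitivity of $\leq_P$ (Lemma~\ref{lem: 1}(1)). Clause~(3), that $\neg x=x''$ implies $x<x''$, follows from clause~(1) together with Lemma~\ref{lem: 16}; this is the first point where discreteness enters, since Lemma~\ref{lem: 16} is precisely what promotes $x\leq x''$ and $\neg x=x''$ to $x<x''$.

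The subtle clause, and the main obstacle, is clause~(2): $x=x''$ implies $y<y''$. Here I would use that discreteness yields decidable equality on $X$ (since $x<x'$ or $x'<x$ forces $\neg x=x'$ by asymmetry) and split on whether $x=x'$. In the case $x=x'$, the first hypothesis gives $y<y'$, and $x=x''$ forces $x'=x''$, so the second hypothesis gives $y'<y''$; transitivity of $<$ on $Y$ then yields $y<y''$. In the case $\neg x=x'$, I expect the case to be vacuous: from $x=x''$ and $x'\leq x''$ we obtain $x'\leq x$, while $x\leq x'$ holds by hypothesis, so positive antisymmetry on $X$ forces $x=x'$, contradicting $\neg x=x'$; hence $y<y''$ holds trivially. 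With clause~(2) settled, transitivity holds and the proof is complete. The essential constructive point is that the case split in clause~(2) is legitimate only because discreteness supplies decidable equality on the first factor; without it the argument collapses, which is consistent with the cautionary Theorem~\ref{thm: 27} indicating that discreteness cannot simply be dropped.
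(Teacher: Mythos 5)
Your proof is correct, and its overall architecture coincides with the paper's: asymmetry and positive antisymmetry are delegated to Theorem~\ref{thm: 27-1}, the relation $x\leq_{P}x''$ comes from transitivity of $\leq_{P}$ (Lemma~\ref{lem: 1}(1)), and clause~(3) is settled by Lemma~\ref{lem: 16}. The one divergence is clause~(2), where you invoke discreteness a second time to obtain decidable equality on $X$ and split on whether $x=x'$. That detour is sound (discreteness plus asymmetry does yield $x=x'$ or $\neg x=x'$, and ex falso quodlibet legitimately disposes of the impossible subcase), but it is redundant: your own ``vacuous'' subcase already contains an unconditional argument. Given $x=x''$, the first hypothesis gives $x\leq_{P}x'$, and the second gives $x'\leq_{P}x''$, hence $x'\leq_{P}x$; positive antisymmetry on $X$ then yields $x=x'$ outright, with no case split, whence $y<y'$ from the first hypothesis, $x'=x''$ and so $y'<y''$ from the second, and transitivity on $Y$ finishes. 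This is exactly how the paper proves clause~(2). Consequently your closing diagnostic is slightly off: discreteness of $X$ enters only once, through Lemma~\ref{lem: 16} in clause~(3); clause~(2) requires no decidability of equality at all, only positive antisymmetry, so the claim that ``the argument collapses'' there without decidable equality is not accurate, even though Theorem~\ref{thm: 27} correctly signals that discreteness cannot be dropped from the theorem as a whole.
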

\begin{proof}
To prove $<_{w}$ is transitive, suppose $\left(x,y\right)<_{w}\left(x',y'\right)<_{w}\left(x'',y''\right)$.
Note that $x\leq x''$ by Lemma \ref{lem: 1}(1). Now suppose $x=x''$.
Since $x\leq x'$, it follows $x''\leq x'$. Since $\left(x',y'\right)<_{w}\left(x'',y''\right)$,
it follows $x'\leq x''$. Hence, $x'=x''$ by positive antisymmetry,
so $x=x'=x''$. Thus $y<y'<y''$, implying $y<y''$. Finally, suppose
$\neg x=x''$. Since $x\leq x''$, it follows $x<x''$ by Lemma \ref{lem: 16}.
Therefore, $\left(x,y\right)<_{w}\left(x'',y''\right)$.

Asymmetry and positive antisymmetry of $<_{w}$ follow from Theorem
\ref{thm: 27-1}.\end{proof}
\begin{thm}
Let $X$ be a discrete generalized ordered set and $Y$ be any generalized
ordered set. Then $\left(x,y\right)<\left(x',y'\right)$ if and only
if $\left(x,y\right)<_{w}\left(x',y'\right)$. \label{thm: 19}\end{thm}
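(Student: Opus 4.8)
The plan is to prove the two implications separately, noting in advance that only the reverse direction will call on discreteness of $X$. Throughout, recall that $\leq$ abbreviates $\leq_{P}$, and that the lexicographic order is $(x,y)<(x',y')$ iff $x<x'$, or $x=x'$ and $y<y'$, while $<_{w}$ is governed by the three clauses (1) $x\leq x'$, (2) $x=x'$ implies $y<y'$, and (3) $\neg x=x'$ implies $x<x'$.

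For the forward direction I would assume $(x,y)<(x',y')$ and split on the defining disjunction. If $x<x'$, then clause (1) holds by Lemma \ref{lem: 1}(2); clause (2) holds vacuously, since its hypothesis $x=x'$ would force $x<x$, impossible by asymmetry; and clause (3) holds because its conclusion $x<x'$ is exactly the standing assumption. If instead $x=x'$ and $y<y'$, then clause (1) holds because $x\leq_{P}x$ is automatic and $\leq_{P}$ respects equality, clause (2) holds since $y<y'$ is given, and clause (3) holds vacuously because $\neg x=x'$ is false. Hence $(x,y)<_{w}(x',y')$, and this direction uses no discreteness.

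For the reverse direction I would assume $(x,y)<_{w}(x',y')$ and invoke discreteness of $X$ to split into the three cases $x<x'$, $x=x'$, and $x'<x$. In the first case $(x,y)<(x',y')$ is immediate; in the second case clause (2) supplies $y<y'$, so again $(x,y)<(x',y')$. The third case must be eliminated: since clause (1) gives $x\leq_{P}x'$, Theorem \ref{thm: 3} (using asymmetry of $<$ on $X$) yields $\neg x'<x$, directly contradicting $x'<x$. By disjunction elimination—concluding $(x,y)<(x',y')$ in the first two cases and arguing ex falso in the third—I obtain $(x,y)<(x',y')$.

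The main obstacle is exactly the elimination of the case $x'<x$ in the reverse direction. Constructively one cannot assert trichotomy outright, so the discreteness hypothesis is essential: it provides the decidable split into $x<x'$, $x=x'$, $x'<x$, and Theorem \ref{thm: 3} is what converts the \emph{positive} clause $x\leq_{P}x'$ into the \emph{negative} statement $\neg x'<x$ needed to discharge the unwanted disjunct. Everything else reduces to routine clause-checking against the two definitions.
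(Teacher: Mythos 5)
Your proposal is correct and takes essentially the same route as the paper: the forward direction is the routine clause-check the paper dismisses as trivial, and your reverse direction matches the paper's exactly, using discreteness to split into $x<x'$, $x=x'$, $x'<x$, reading $y<y'$ off clause (2) in the middle case, and eliminating $x'<x$ by deriving $\neg\, x'<x$ from clause (1) via Theorem \ref{thm: 3}.
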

\begin{proof}
The forward direction is trivial. Now suppose $\left(x,y\right)<_{w}\left(x',y'\right)$.
Since $X$ is discrete, $x<x'$, $x=x'$, or $x'<x$. Note that $x=x'$
implies $y<y'$. Also, observe that $\neg x'<x$ since $x\leq x'$
implies $\neg x'<x$ by Theorem \ref{thm: 3}. Hence, $\left(x,y\right)<\left(x',y'\right)$.
\end{proof}
Two generalized ordered sets are \textit{isomorphic} if there is an
onto embedding between them, and such an embedding is called an \textit{isomorphism}.
\begin{thm}
Let $X$ be a discrete generalized ordered set and $Y$ be any generalized
ordered set. Then $\left(X\times Y,<\right)\cong\left(X\times Y,<_{w}\right)$.
\label{thm: 25}\end{thm}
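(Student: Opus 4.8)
The plan is to exhibit the identity function on the common underlying set as the desired isomorphism. Recall that an isomorphism of generalized ordered sets is, by definition, an onto embedding, and that an embedding $f$ is a function satisfying $u<v$ if and only if $f(u)<f(v)$. Since both the source $\left(X\times Y,<\right)$ and the target $\left(X\times Y,<_{w}\right)$ have the same underlying set $X\times Y$, the natural candidate is the map $\mathrm{id}\colon X\times Y\to X\times Y$ given by $\left(x,y\right)\mapsto\left(x,y\right)$.

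Before invoking the definition of isomorphism, I would first record that both structures are genuinely generalized ordered sets, so that the notion applies. Because $X$ is discrete it is in particular a generalized ordered set, and $Y$ is one by hypothesis; hence $\left(X\times Y,<\right)$ is a generalized ordered set by Theorem \ref{thm: 13}, and $\left(X\times Y,<_{w}\right)$ is a generalized ordered set by Theorem \ref{thm: 29}. Next I would verify the embedding condition for $\mathrm{id}$: for $\left(x,y\right),\left(x',y'\right)\in X\times Y$ I must show that $\left(x,y\right)<\left(x',y'\right)$ if and only if $\mathrm{id}\left(x,y\right)<_{w}\mathrm{id}\left(x',y'\right)$, that is, if and only if $\left(x,y\right)<_{w}\left(x',y'\right)$. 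This equivalence is exactly the statement of Theorem \ref{thm: 19}, which asserts that for discrete $X$ the lexicographic and weak lexicographic orders coincide on $X\times Y$. Thus $\mathrm{id}$ is an embedding, and since every $\left(x',y'\right)\in X\times Y$ is the image of itself, $\mathrm{id}$ is trivially onto; therefore it is an isomorphism, and $\left(X\times Y,<\right)\cong\left(X\times Y,<_{w}\right)$.

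I do not expect a substantive obstacle here, since the essential comparison of the two relations has already been carried out in Theorem \ref{thm: 19}. The only points that require care are the preliminary bookkeeping, namely confirming that each side is in fact a generalized ordered set (via Theorems \ref{thm: 13} and \ref{thm: 29}) so that the definition of isomorphism is meaningful, and observing that surjectivity is automatic for the identity map. In short, the theorem is essentially a repackaging of the order-equivalence of Theorem \ref{thm: 19} in the categorical language of isomorphism.
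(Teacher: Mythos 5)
Your proposal is correct and matches the paper's own proof exactly: the paper likewise takes the identity function on $X\times Y$, notes it is an embedding by Theorem \ref{thm: 19}, and concludes. Your added bookkeeping (that both sides are generalized ordered sets via Theorems \ref{thm: 13} and \ref{thm: 29}, and that surjectivity is automatic) is sound but implicit in the paper's two-line argument.
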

\begin{proof}
The isomorphism is the identity function on $X\times Y$. It is an
embedding by Theorem \ref{thm: 19}.
\end{proof}

\section{Further Developments }

\subsection{An Excursion into Type Theory}

The notion of a \textit{set}%
\footnote{Roughly speaking, a set is defined as a special kind of a type.%
}\textit{ }can be generalized to the notion of a \textit{type} \cite{key-18,key-22,key-16}.
Properties that we have introduced earlier (e.g., asymmetry, transitivity)
can be extended to types. In what follows, we will show how this can
be done. To acquire some familiarity with type theory, these sources
can be consulted \cite{key-13-1,key-18,key-22,key-8-1} 

Before we get into more details, we introduce some basics. We will
assume the existence of these types: the type with no element, the
type with exactly one element, the type of natural numbers, the type
of propositions, denoted $\mathsf{Prop}$, and a universe, which has
types as elements. A universe is closed under certain type formers:
given types $A,B$ in a universe $\mathcal{U}$,
\begin{enumerate}
\item the type of functions, denoted by $A\rightarrow B$, is in $\mathcal{U}$;
\item the type of pairs $\left(a,b\right)$ with $a$ in $A$ and $b$ in
$B$, which is denoted by $A\times B$ , is in $\mathcal{U}$;
\item they type $A+B$, which consists of two functions $\mathsf{inl}:A\rightarrow A+B$
and $\mathsf{inr}:B\rightarrow A+B$, is in $\mathcal{U}$. 
\item the type $\left|A\right|$, which is a proposition, is in $\mathcal{U}$.
\end{enumerate}
A family of types indexed by another type $A$ is a function $B$
from $A$ to the universe $\mathcal{U}$, so each $B\left(a\right)$
is a type. Given a family of types $B:A\rightarrow\mathcal{U}$, one
can construct two other types. These are the types of dependent functions,
denoted by $\Pi_{x:A}B\left(x\right)$, where each function $f$ has
the property that $f\left(a\right)$ is in $B\left(a\right)$ for
each $a$ in $A$; and the type of dependent pairs, written as $\Sigma_{a:A}B\left(a\right)$,
where for each pair $\left(a,b\right)$, the element $a$ is in $A$
and $b$ is in $B\left(a\right)$. 

We need to introduce more concepts before we can implement our plan.
Recall we have mentioned the type of propositions, so each proposition
is a type. For instance, a theorem is a type, and a proof is an element
of that theorem. In particular, the logic used for reasoning can be
translated in terms of types under the so-called Curry-Howard correspondence.
Here are the details. Given propositions $P,Q$, 
\begin{enumerate}
\item ``$\neg P$'' is the type $P\rightarrow\mathbf{0}$, where $\mathbf{0}$
is the type with no element;
\item ``$P$ and $Q$'' is the type $P\times Q$;
\item ``$P$ or $Q$'' is the type $\left|P+Q\right|$;
\item ``if $P$, then $Q$'' is the type $P\rightarrow Q$;
\item for any type $A$, ``for all $x:A$, $Q\left(x\right)$'' is the
type $\Pi_{x:A}Q\left(x\right)$;
\item for any type $A$, ``there exists $x:A$, $Q\left(x\right)$'' is
the type $\left|\Sigma_{x:A}Q\left(x\right)\right|$.
\end{enumerate}
Before talking about binary relations, we need to mention that study
of types with binary relations is nothing new. The real numbers \cite{key-18,key-10},
defined both as Dedekind cuts and Cauchy sequences, and the surreal
numbers \cite{key-18} are examples of types with binary relations.
A binary relation on a type $A$ is a function from $A\times A$ to
the type of propositions. Instead of writing a binary relation as
a function $\mathsf{rel}:A\times A\rightarrow\mathsf{Prop}$, we will
follow the usual practice of \textit{currying }to write a binary relation
as $\mathsf{rel}:A\rightarrow A\rightarrow\mathsf{Prop}$, which is
a function from $A$ to $A\rightarrow\mathsf{Prop}$. 

To differentiate the asymmetric%
\footnote{See the third section for the definition of asymmetry.%
} property for sets from what we will define for types, we will use
the letter ``t'' as a prefix. We will, though, use the notation
$<$ for the binary relation and will write ``$x<y$'' to mean $<\left(x,y\right)$.
Since we need a type and a binary relation to define t-asymmetry,
we expect the definition to depend on two parameters. Plainly, a binary
relation $<$ on a type $A$ is \textit{t-asymmetric} if, for all
$x,y$ in $A$, $x<y$ implies that $y<x$ is false. Under the Curry-Howard
correspondence, we can write t-asymmetry as 
\[
\textrm{t-asym}\left(A,<\right):=\Pi_{x,y:A}x<y\rightarrow\left(y<x\rightarrow\mathbf{0}\right).
\]
The other properties can be defined in a similar way, so a \textit{generalized
ordered type} $A$ can be defined as the type 
\[
\textrm{genOrd}\left(A,<\right):=\textrm{t-asym}\left(A,<\right)\times\textrm{t-trans}\left(A,<\right)\times\textrm{t-posAnt}\left(A,<\right).
\]

\subsection{Problems}

\subsubsection{Isomorphic theories}

The notion of a set can be captured in type theory. There are several
approaches, some of which are translation of Zermelo-Fraenkel set
theory in type theory \cite{key-15}, definition of a set as a special
kind of type \cite{key-18}, and definition%
\footnote{This definition of a set as a pair is sometimes called a \textit{setoid}
or a \textit{Bishop set}.%
} of a set as a pair made up of a type and a binary relation on that
type \cite{key-16}. Whichever approach is used, the notion of a generalized
ordered set can be translated in type theory, so a theory of generalized
ordered sets can be developed inside type theory. Futhermore, we have
shown from the previous subsection how the notion of a generalized
ordered type can be defined in type theory, so a theory of generalized
ordered types can also be developed. We designate the former theory
as $\mathsf{Th}_{genOrd,set}$ and the latter as $\mathsf{Th}_{genOrd,type}$.
Is there a constructive proof that these two theories are essentially
the same, in the sense that for each theorem $T$ of $\mathsf{Th}_{genOrd,set}$
there is a theorem $T'$ of $\mathsf{Th}_{genOrd,type}$ such that
$T$ and $T'$ are equivalent in the ambient type theory, and vice
versa?

\subsubsection{Unique partial order }

Different partial orders can be defined on a set with a binary relation
$<$. For instance, $\leq_{N}$, $\leq_{P}$, and the relation $x\leq y$
if $x<y$ or $x=y$ are all partial orders (see Sections 3 \& 4),
but they are not all the same: on the real numbers, there is no constructive
proof that $\leq_{N}$ is the same as $\leq$ (see 3.1). However,
$\leq_{N}$ and $\leq_{P}$ are the same if $<$ is asymmetric and
cotransitive (Theorems \ref{thm: 1} \& \ref{thm: 3}). A natural
question is whether it is possible to impose certain conditions on
the relation $<$ such that any partial order defined from $<$ is
unique.

\subsubsection{Generalized $\mathbb{N}$-ordered Sets}

Given any set $X$, we can define an $\mathbb{N}$-ary relation on
$X$ as any subset of $X^{\mathbb{N}}$, where $X^{\mathbb{N}}$ is
the set of all functions from the natural numbers $\mathbb{N}$ to
$X$. We can follow a similar convention to a binary relation when
writing that an infinite sequence of elements of $X$ are related.
We denote an $\mathbb{N}$-ary relation as $<_{\mathbb{N}}$, and
we write $<_{\mathbb{N}}\left(x_{1},x_{2},\ldots\right)$ to mean
that the terms of the infinite sequence $x_{1},x_{2},\ldots$ are
related under $<_{\mathbb{N}}$. A question is what a natural definition
of asymmetry%
\footnote{See Section 3.%
}, transitivity%
\footnote{See Section 3.%
}, and positive antisymmetry%
\footnote{See Section 3.%
} is for any $\mathbb{N}$-ary relation.

Furthermore, for each $k\in\mathbb{N}$, any $\mathbb{N}$-ary relation
on $X$ yields a natural $k$-ary relation on $X$, which is the restriction
of the $\mathbb{N}$-ary relation on the first $k$ terms of each
$<_{\mathbb{N}}\left(x_{1},x_{2},\ldots\right)$. The identity function
on $X$ naturally preserves the $\mathbb{N}$-ary relation to the
$k$-ary relation, in the sense that if $<_{\mathbb{N}}\left(x_{1},x_{2},\ldots\right)$
then $<_{k}\left(x_{1},x_{2},\ldots,x_{k}\right)$ for each $k\in\mathbb{N}$.
We denote by $\textrm{id}_{k}$ that natural order-preserving identity
function from $\left(X,<_{\mathbb{N}}\right)$ to $\left(X,<_{k}\right)$.
A question is whether there is a natural universal property for $\mathbb{N}$-ary
relations in this sense: for any $\mathbb{N}$-ary relation $<'_{\mathbb{N}}$
on $X$ and any sequence of order-preserving functions $f_{k}$ from
$\left(X,<'_{\mathbb{N}}\right)$ to $\left(X,<_{k}\right)$, there
is a unique order-preserving function $g$ from $\left(X,<'_{\mathbb{N}}\right)$
to $\left(X,<_{\mathbb{N}}\right)$ such that $f_{k}=\textrm{id}_{k}\circ g$
for each $k$.

\bigskip{}

\bigskip{}

\bigskip{}

\bigskip{}

\thanks{The author thanks Michael Shulman and Mart\'in Escard\'o for bringing
$\mathsf{Prop}$ to his attention.}

\end{document}